\documentclass[12pt]{article}

\setlength{\textheight}{23cm}
\setlength{\topmargin}{-2cm}

\usepackage[T1]{fontenc}
\usepackage{textcomp}
\usepackage{mathrsfs}

\usepackage{amssymb}
\usepackage{amsmath}
\usepackage{latexsym}
\usepackage{amscd}
\usepackage{amsthm}
\usepackage{bm}

\usepackage[dvipdfm]{graphicx}

\newcommand{\Q}{\Bbb Q}
\newcommand{\Z}{\Bbb Z}

\newcommand{\F}{\Bbb F}
\newcommand{\ga}{{\rm Gal}}
\newcommand{\f}[1]{{\frak #1}}

\newcommand{\ca}[1]{{\mathcal #1}}

\newcommand{\fulltoday}{\number\day\space \ifcase\month\or
    January\or February\or March\or April\or May\or June\or
    July\or August\or September\or October\or November\or December\fi
    \space\number\year}

\theoremstyle{plain}

\newtheorem{thm}{\indent\bf Theorem}
\newtheorem{lem}{\indent\bf Lemma}
\newtheorem{prop}{\indent\bf Proposition}
\newtheorem{cor}{\indent\bf Corollary}

\newtheorem*{conj1}{\indent\bf Conjecture}

\begin{document}
\title{
Reports on families of imaginary abelian fields with 
pseudo-null unramified Iwasawa modules
}

\author{
Satoshi FUJII\thanks{
Faculty of Education, Shimane University, 
1060 Nishikawatsucho, Matsue, Shimane, 690--8504, Japan. 
e-mail : {\tt fujiisatoshi@edu.shimane-u.ac.jp}, 
{\tt mophism@gmail.com}
}
}
\date{
}
\maketitle

\begin{abstract}
Let $p$ be a prime number. 
We show that, 
there exists an infinite family of imaginary abelian fields such that, 
the Iwasawa module of the maximal multiple $\Z_p$-extension is non trivial and 
pseudo-null for each field in the family. 
We also discuss on an application to non-abelian Iwasawa theory in the sense of Ozaki. 
\end{abstract}
\footnote[0]{
2000 \textit{Mathematics Subject Classification}. 
Primary : 11R23. 
}

\section{Introduction}

All algebraic extensions of the field of rational numbers $\Q$ are assumed to be 
contained in a fixed algebraic closure of $\Q$. 
Let $k/\Q$ be a finite extension. 
For a prime number $p$, 
let $\Z_p$ be the ring of $p$-adic integers. 
Let $\tilde{k}$ be the composite field of all $\Z_p$-extensions of $k$. 
Then, 
by theorem 3 of \cite{Iwasawa73} (see also theorem $13.4$ of \cite{Washington}), 
there exists a non negative integer $\delta$ such that 
$\ga(\tilde{k}/k)$ is isomorphic to $\Z_p^{r_2+1+\delta}$ as a topological group, 
here $r_2$ denotes the number of complex primes of $k$. 
We should remark that if Leopoldt's conjecture for $p$ and $k$ holds 
then $\delta=0$. 
It is known from Brumer's result \cite{Brumer} that if $k$ is contained in an abelian extension 
of an imaginary quadratic field then 
Leopoldt's conjecture for $p$ and $k$ holds, 
and hence $\tilde{k}/k$ is a $\Z_p^{r_2+1}$-extension. 

Let $L_{\tilde{k}}/\tilde{k}$ be the maximal unramified abelian pro-$p$ extension 
and put $X_{\tilde{k}}=\ga(L_{\tilde{k}}/\tilde{k})$, 
which is often called the unramified Iwasawa module of $\tilde{k}$. 
The Galois group $\ga(\tilde{k}/k)$ acts on $X_{\tilde{k}}$ via the inner automorphism, 
and then, 
the complete group ring 
$$
\Lambda
=
\Z_p[\![\ga(\tilde{k}/k)]\!]
=
\varprojlim_{
k\subseteq k^{\prime}\subseteq \tilde{k},
[k^{\prime}:k]<\infty
}
\Z_p[\ga(k^{\prime}/k)]
$$
of $\ga(\tilde{k}/k)$ with coefficients in $\Z_p$ acts on $X_{\tilde{k}}$, 
the projective limit is taken with respect to restriction maps of Galois groups. 
It is shown that $X_{\tilde{k}}$ is a finitely generated torsion $\Lambda$-module, 
see theorem $1$ of \cite{Greenberg73}. 
A finitely generated torsion $\Lambda$-module $M$ is called pseudo-null 
if the annihilator ideal of $M$ over $\Lambda$ is not contained in any height one 
prime ideals of $\Lambda$, 
and write $M\sim 0$ for pseudo-null $\Lambda$-modules $M$. 
For example, 
$M=0$ is a pseudo-null module since the annihilator ideal of $0$ is $\Lambda$. 
In \cite{Greenberg}, 
Greenberg proposed the following conjecture. 

\begin{conj1}[Greenberg's generalized conjecture, \cite{Greenberg}]\label{GGC}
For each prime number $p$ and each number field $k$, 
it holds that $X_{\tilde{k}}\sim 0$. 
\end{conj1}

In \cite{GC}, 
Greenberg originally proposed so called Greenberg's conjecture (GC for short), 
which asserts the finiteness of the unramified Iwasawa module 
of the cyclotomic $\Z_p$-extension of totally real fields. 
Thereafter, 
Greenberg proposed the above conjecture. 
For $\Z_p$-extensions, 
on the unramified Iwasawa module, 
it is well known that the finiteness is equivalent to the pseudo-nullity. 
If Leopoldt's conjecture holds for a totally real field $k$ and a prime number $p$, 
then the cyclotomic $\Z_p$-extension is the unique $\Z_p$-extension of $k$. 
Hence, 
Greenberg's generalized conjecture (GGC for short) is in fact a generalization of 
GC in this sense. 
No counterexamples of GC and GGC have been found yet.

In this article, 
we will discuss on families of prime numbers $p$ and number fields 
$k$ such that $X_{\tilde{k}}\sim 0$. 
As we will see in section 2, 
it had been shown that, 
for each prime number $p$, 
there exist infinitely many imaginary quadratic fields $k$ such that 
$X_{\tilde{k}}=0$. 
The main result of this article is as follows. 
\\

{\bf Main Theorem (Theorem \ref{main1} of Section 2).} 
{\it Let $p$ be a prime number. 
Then there exist infinitely many imaginary abelian fields $k$ such that $X_{\tilde{k}}\neq 0$  
and $X_{\tilde{k}}\sim 0$.}
\\

From theorem 1 of \cite{Ozaki2004}, 
for each prime number $p$, there exist infinitely many cyclotomic 
$\Z_p$-extensions $k_{\infty}$ of 
totally real fields $k$ such that 
the Iwasawa module of $k_{\infty}/k$ is non trivial and finite, 
namely pseudo-null. 
In fact Ozaki has obtained a much stronger conclusion about the structure of 
Iwasawa modules. 
Although we cannot mention on the structure of $X_{\tilde{k}}$ here, 
we will prove that at least there is an infinite family of imaginary abelian fields 
with non trivial pseudo-null Iwasawa modules.

\section{Examples of prime numbers and number fields}
In this section, 
first, 
we introduce some results 
(propositions \ref{proposition1}, 
\ref{thm1}, 
and \ref{thm2}, 
and corollary \ref{corofprop1}), 
which can be deduced directly from combining known results. 
Second we present our main result (theorem \ref{main1}). 
Let $h_k$ be the class number of a number field $k$. 
Let $P$ be a prime number or a finite prime of a number field. 
When $P$ divides an integer $b$ we write $P\mid b$, 
if not, 
we write $P\nmid b$. 

\begin{lem}[See \cite{Iwasawa}]\label{Iwasawa56}
Let $p$ be a prime number. 
Let $L/K$ be a finite $p$-extension of a finite extension $K/\Q$. 
Suppose that $L/K$ is ramified at only one prime of $K$ and is totally ramified. 
If $p\nmid h_K$ then $p\nmid h_L$. 
\end{lem}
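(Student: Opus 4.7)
The plan is to reduce the statement to the case in which $L/K$ is cyclic of degree $p$ and then invoke Chevalley's ambiguous class number formula.

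First I would induct on $[L:K]$. Since $\ga(L/K)$ is a non-trivial finite $p$-group, it admits a normal subgroup of index $p$, giving an intermediate field $L'$ with $[L:L']=p$ and $L/L'$ cyclic of order $p$. The hypothesis of the lemma is preserved on each floor: if $L/K$ is totally ramified at the unique prime $\f{p}$ and unramified elsewhere, then the same is true of $L'/K$ (with ramified prime $\f{p}$) and of $L/L'$ (with ramified prime the unique prime of $L'$ above $\f{p}$). Applying the induction hypothesis to $L'/K$, we may replace $K$ by $L'$ and reduce to the case in which $L/K$ is itself cyclic of order $p$.

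In that case, with $G=\ga(L/K)$ and $\f{p}$ the unique ramified prime of $K$, Chevalley's formula gives
$$
|\mathrm{Cl}_L^{G}|
=\frac{h_K\,\prod_v e_v}{[L:K]\,[E_K:E_K\cap N_{L/K}L^{\times}]}
=\frac{h_K}{[E_K:E_K\cap N_{L/K}L^{\times}]},
$$
because $e_{\f{p}}=[L:K]=p$ while $e_v=1$ for every other finite prime $v$ of $K$. Thus $|\mathrm{Cl}_L^{G}|$ divides $h_K$, and the hypothesis $p\nmid h_K$ forces $p\nmid|\mathrm{Cl}_L^{G}|$.

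To conclude, let $A$ be the $p$-Sylow subgroup of $\mathrm{Cl}_L$, which is $G$-stable. The standard orbit-counting argument for a non-trivial $p$-group acting on a non-trivial finite $p$-group shows that $A^{G}=0$ forces $A=0$; and since $A^{G}$ is a $p$-subgroup of $\mathrm{Cl}_L^{G}$, the divisibility $p\nmid|\mathrm{Cl}_L^{G}|$ yields $A^{G}=0$, hence $A=0$, that is, $p\nmid h_L$. The one point that merits care is the precise simplification of Chevalley's formula (in particular, the contribution of archimedean primes when $p=2$, where the factor $e_v\le 2$ is still harmless); beyond that, the argument is simply a one-step-at-a-time transport of the coprimality condition up the tower.
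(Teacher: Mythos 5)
The paper itself offers no proof of this lemma: it is quoted directly from Iwasawa's 1956 note, whose classical argument runs through the $p$-Hilbert class field $M$ of $L$ --- one shows $M/K$ is Galois, that the inertia subgroup $I$ of the unique ramified prime generates $\ga(M/K)$ modulo the commutator subgroup (this is where $p\nmid h_K$ enters), and then Burnside's basis theorem forces $\ga(M/K)=I$, i.e.\ $M=L$. Your route --- reduce to the cyclic degree-$p$ case and apply Chevalley's ambiguous class number formula together with the fixed-point argument for a $p$-group acting on a finite $p$-group --- is a genuinely different and perfectly standard alternative; it is more computational but makes the role of the single totally ramified prime (cancelling the $[L:K]$ in the denominator) completely transparent, whereas Iwasawa's argument handles the whole $p$-extension in one stroke without any reduction.

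Two points should be tightened. First, in the reduction step you take a normal subgroup of \emph{index} $p$ in $\ga(L/K)$ and then assert $[L:L']=p$ with $L/L'$ cyclic; under the Galois correspondence that subgroup gives $[L':K]=p$ and $\ga(L/L')$ of order $[L:K]/p$, not order $p$. Either choice works for the induction (take $H$ normal of index $p$, apply the cyclic case to $L'/K$ and the inductive hypothesis to $L/L'$; or take $H$ central of order $p$ and argue the other way), but as written the statement is internally inconsistent and should be fixed. Second, the parenthetical claim that archimedean ramification for $p=2$ is ``still harmless'' is not actually true as a general principle: if one permitted real places to ramify in addition to the one finite prime, the extra factors $e_v=2$ in the numerator of Chevalley's formula need not be absorbed by the unit index $[E_K:E_K\cap N_{L/K}L^{\times}]$, and the conclusion can fail. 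What saves you is the hypothesis itself: ``ramified at only one prime,'' read as Iwasawa intends (one prime, finite or infinite, with all others unramified), forces $e_v=1$ at every archimedean place, so the product collapses exactly as in your display. You should invoke the hypothesis here rather than a claim of harmlessness. With these two repairs the argument is complete and correct.
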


Let $p$ be a prime number. 
It is known that $X_{\tilde{k}}$ is also defined to be the projective limit with respect to 
the norm maps of the $p$-Sylow subgroups of the ideal class groups 
of intermediate fields $k^{\prime}$ of $\tilde{k}/k$ 
which are finite over $k$. 
Hence if $p\nmid h_{k^{\prime}}$ for each $k^{\prime}$ then $X_{\tilde{k}}=0$. 
Assume that the prime number $p$ does not split in $k/\Q$ and $p\nmid h_k$.
Then, 
$\tilde{k}/k$ is totally ramified at the unique prime of $k$ lying above $p$. 
By lemma \ref{Iwasawa56} and $p\nmid h_k$, 
we have $p\nmid h_{k^{\prime}}$. 
Hence we have the following. 

\begin{prop}\label{proposition1}
Let $p$ be a prime number and $k/\Q$ a finite extension. 
If $p$ does not split in $k/\Q$ and if $p\nmid h_k$, 
then $X_{\tilde{k}}=0$. 
In particular, 
$X_{\tilde{k}}\sim 0$. 
\end{prop}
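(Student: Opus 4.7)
The plan is to reduce the statement to Lemma \ref{Iwasawa56} via the standard description of $X_{\tilde{k}}$ as a projective limit of $p$-parts of class groups of finite subextensions of $\tilde{k}/k$. Once that reduction is in place, the hypothesis $p \nmid h_k$ propagates up the whole tower and forces $X_{\tilde{k}} = 0$.

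First I would invoke the fact that $X_{\tilde{k}} = \varprojlim_{k'} A_{k'}$ under ideal norm maps, where $k'$ ranges over finite subextensions of $\tilde{k}/k$ and $A_{k'}$ denotes the $p$-Sylow subgroup of the ideal class group of $k'$. It then suffices to prove $p \nmid h_{k'}$ for every such $k'$.

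Next I would verify that $\tilde{k}/k$ is totally ramified at the unique prime $\f{p}$ above $p$. Every $\Z_p$-extension of a number field is unramified outside $p$, so $\tilde{k}/k$ is unramified outside $\f{p}$. The fixed field of the inertia subgroup $I_{\f{p}} \subseteq \ga(\tilde{k}/k)$ is then an abelian extension of $k$ unramified at $\f{p}$, and hence unramified everywhere, so it is contained in the $p$-Hilbert class field of $k$; since $p \nmid h_k$, this fixed field is $k$ itself and $I_{\f{p}} = \ga(\tilde{k}/k)$. Consequently any finite subextension $k'/k$ of $\tilde{k}/k$ is a finite $p$-extension ramified at only one prime of $k$ and totally ramified there, and Lemma \ref{Iwasawa56} applied with $K = k$, $L = k'$ gives $p \nmid h_{k'}$.

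Passing to the projective limit yields $X_{\tilde{k}} = 0$, and the pseudo-nullity $X_{\tilde{k}} \sim 0$ is immediate, since the annihilator of the zero module is $\Lambda$ itself and hence lies in no height-one prime. The one step I would single out as the only delicate point is the total ramification of $\tilde{k}/k$ at $\f{p}$: it is standard, but it uses both hypotheses in tandem (non-split gives uniqueness of $\f{p}$, and $p \nmid h_k$ rules out a nontrivial unramified abelian subextension), and is therefore what a careful reader would want to see spelled out.
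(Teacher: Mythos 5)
Your proof is correct and follows essentially the same route as the paper: reduce to showing $p\nmid h_{k'}$ for every finite layer $k'$ via the projective-limit description of $X_{\tilde{k}}$, establish that $\tilde{k}/k$ is totally ramified at the unique prime above $p$, and apply lemma \ref{Iwasawa56}. The only difference is that you spell out the class-field-theoretic argument for total ramification (inertia fixed field is unramified abelian, hence trivial since $p\nmid h_k$), a step the paper asserts without detail.
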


Let $k/\Q$ be a finite cyclic extension. 
There exist infinitely many prime numbers $p$ such that, 
$p$ does not divide the discriminant of $k$, 
$p\nmid h_k$, 
and that the Frobenius of $p$ in $k/\Q$ generates $\ga(k/\Q)$. 
For such prime numbers $p$, 
it holds that $X_{\tilde{k}}=0$ by proposition \ref{proposition1}. 
Thus we have the following.

\begin{cor}\label{corofprop1}
Let $k/\Q$ be a finite cyclic extension. 
Then there exist infinitely many prime numbers $p$ such that 
$X_{\tilde{k}}=0$. 
\end{cor}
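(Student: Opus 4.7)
The plan is to apply the Chebotarev density theorem to the finite cyclic extension $k/\Q$ in order to produce the desired primes, and then invoke Proposition \ref{proposition1}.

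First I would fix $k/\Q$ cyclic of degree $n$, and consider the conjugacy class (which is a singleton, since $\ga(k/\Q)$ is abelian) of a generator $\sigma$ of $\ga(k/\Q)$. By Chebotarev, the set of prime numbers $p$ that are unramified in $k/\Q$ and whose Frobenius equals $\sigma$ has density $1/n > 0$; in particular, this set is infinite. Removing from it the finitely many primes dividing $h_k$ still leaves an infinite set $S$ of primes $p$ with the following properties: $p$ is unramified in $k/\Q$, the Frobenius of $p$ generates $\ga(k/\Q)$, and $p \nmid h_k$.

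Second I would check that each $p \in S$ meets the hypothesis of Proposition \ref{proposition1}, i.e. that $p$ does not split in $k/\Q$. For such $p$, the decomposition group at any prime of $k$ above $p$ is cyclic, generated by the Frobenius, and therefore equals $\ga(k/\Q)$; since $p$ is unramified, this forces $p$ to be inert, so there is a unique prime of $k$ above $p$. Combined with $p \nmid h_k$, Proposition \ref{proposition1} gives $X_{\tilde{k}} = 0$ for every $p \in S$, which yields the corollary.

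There is no real obstacle here: the only subtle point is reconciling the phrasing ``$p$ does not split in $k/\Q$'' in Proposition \ref{proposition1} with the Chebotarev output, but for an unramified prime in a cyclic extension the condition ``Frobenius generates $\ga(k/\Q)$'' is exactly equivalent to ``$p$ is inert'', and inertness is precisely the non-splitting condition used in the proposition. Everything else is a direct application of Chebotarev together with the finiteness of the class number $h_k$.
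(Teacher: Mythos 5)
Your argument is correct and coincides with the paper's own proof: the paper likewise selects, via Chebotarev, infinitely many primes $p$ unramified in $k$, not dividing $h_k$, and with Frobenius generating $\ga(k/\Q)$ (hence inert, hence non-split), and then applies Proposition \ref{proposition1}. Your added remark that ``Frobenius generates the Galois group'' is equivalent to inertness for unramified primes in a cyclic extension is exactly the point the paper leaves implicit.
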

 
In the rest of Section 2, 
we discuss on the existence of families of number fields with pseudo-null Iwasawa modules 
for each prime number. 
Minardi proved the following result. 

\begin{prop}[Minardi \cite{Minardi}]\label{Minardi}
Let $p$ be a prime number and $k$ an imaginary quadratic field. 
If $p\nmid h_k$, 
then $X_{\tilde{k}}\sim 0$. 
\end{prop}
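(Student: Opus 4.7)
The plan is to split by how $p$ decomposes in $k/\Q$, handling the two cases by different Iwasawa-theoretic arguments.

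If $p$ is inert or ramified in $k/\Q$, then Proposition~\ref{proposition1} applied to $(k,p)$ directly gives $X_{\tilde{k}}=0$, so in particular $X_{\tilde{k}}\sim 0$.

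Assume henceforth that $p\mathcal{O}_k=\mathfrak{p}_1\mathfrak{p}_2$ splits. Since $k$ is imaginary quadratic, Brumer's theorem gives $\ga(\tilde{k}/k)\cong\Z_p^2$, and $\tilde{k}/k$ is unramified outside $\{\mathfrak{p}_1,\mathfrak{p}_2\}$. Let $I_i\subset\ga(\tilde{k}/k)$ be the inertia subgroup at $\mathfrak{p}_i$. Since $k_{\mathfrak{p}_i}\cong\Q_p$, local class field theory makes $I_i$ a closed $\Z_p$-subgroup of rank at most one. The hypothesis $p\nmid h_k$ rules out any nontrivial unramified pro-$p$ subextension of $\tilde{k}/k$, so $I_1+I_2=\ga(\tilde{k}/k)$; by rank considerations both $I_i\cong\Z_p$ and $I_1\cap I_2=0$, so that topological generators $\gamma_i\in I_i$ form a $\Z_p$-basis of $\ga(\tilde{k}/k)$. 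Set $L_i=\tilde{k}^{I_i}$; each $L_i/k$ is a $\Z_p$-extension unramified at $\mathfrak{p}_i$ and totally ramified at $\mathfrak{p}_{3-i}$. Iterated application of Lemma~\ref{Iwasawa56} along the layers $L_{i,n}/k$ (each a $p$-extension totally ramified at a single prime) yields $p\nmid h_{L_{i,n}}$ for every $n$, whence $X_{L_i}=0$.

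To deduce pseudo-nullity of $X_{\tilde{k}}$, put $\omega_i=\gamma_i-1\in\Lambda$, so that $\Lambda\cong\Z_p[[\omega_1,\omega_2]]$ and $\omega_1,\omega_2$ is a regular sequence generating the augmentation ideal. A standard control theorem in the style of Greenberg, applied to the $\Z_p$-subextension $L_i\subset\tilde{k}$ of $k$, provides a map $X_{\tilde{k}}/\omega_i X_{\tilde{k}}\to X_{L_i}$ with finite kernel and cokernel; hence $X_{\tilde{k}}/\omega_i X_{\tilde{k}}$ is finite for $i=1,2$. The structure theorem for finitely generated torsion $\Lambda$-modules then forces $X_{\tilde{k}}\sim 0$: a height-one prime $(f)$ in the support of $X_{\tilde{k}}$ with $f$ a nonunit would contribute an infinite subquotient $\F_p[[\omega_{3-i}]]$ (when $f=p$) or $\Z_p[[\omega_{3-i}]]/f(0,\omega_{3-i})^n$ (when $f$ is a distinguished polynomial, the latter being infinite unless $f(0,0)\in\Z_p^\times$, forcing $f\in\Lambda^\times$) to some $X_{\tilde{k}}/\omega_i X_{\tilde{k}}$, contradicting finiteness.

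The chief obstacle will be the last step: formulating the control theorem precisely enough to quantify the finite kernels and cokernels arising from the ramification of $\tilde{k}/L_i$ at primes above $\mathfrak{p}_i$, and making the support-propagation argument rigorous in the presence of the pseudo-null (not merely finite) corrections inherent to two-variable Iwasawa theory.
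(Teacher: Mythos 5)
First, a point of comparison: the paper does not prove this proposition at all --- it is imported verbatim from Minardi's thesis --- so there is no in-paper argument to measure you against. Your strategy (non-split case via Proposition~\ref{proposition1}; split case via the two $\Z_p$-extensions $L_1,L_2$ of $k$ each ramified at exactly one prime above $p$, Iwasawa's lemma to get $X_{L_i}=0$, then descent to the coinvariants $X_{\tilde{k}}/\omega_iX_{\tilde{k}}$) is indeed the standard route attributed to Minardi, and everything up to and including $X_{L_i}=0$ is correct.

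The genuine gap is the assertion that the map $X_{\tilde{k}}/\omega_i X_{\tilde{k}}\to X_{L_i}$ has \emph{finite} kernel. The map is surjective, but its kernel is the closed $\Z_p$-submodule generated by inertia classes $\bar{a}_w$, one for each prime $w$ of $L_i$ above $\f{p}_i$ other than a fixed one. Two things are needed here and neither is addressed: (i) that $\f{p}_i$ is finitely decomposed in $L_i$ (true, but it requires an argument --- e.g.\ a generator $\alpha$ of $\f{p}_i^{h_k}$ satisfies $\alpha\overline{\alpha}=p^{h_k}$, so $\alpha$ cannot be a root of unity in $k_{\f{p}_{3-i}}^{\times}$, whence the Frobenius of $\f{p}_i$ has infinite order in $\ga(L_i/k)$); and (ii), more seriously, that the finitely many classes $\bar{a}_w$ are torsion. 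Without (ii) you only obtain that $X_{\tilde{k}}/\omega_iX_{\tilde{k}}$ is finitely generated over $\Z_p$, and that is not enough for your concluding algebra: the module $M=\Lambda/(\omega_1-\omega_2)$ has $M/\omega_iM\cong\Z_p$ for $i=1,2$, finitely generated over $\Z_p$ in both directions, yet $M$ is not pseudo-null. Proving genuine finiteness of the coinvariants is the crux of Minardi's theorem, and it is exactly the step your proposal dismisses as a ``standard control theorem.'' By contrast, the step you flag as the chief obstacle --- passing from finiteness of $X_{\tilde{k}}/\omega_iX_{\tilde{k}}$ to pseudo-nullity --- is the easy part and needs only one $i$: since $\mathrm{Supp}(X_{\tilde{k}}/\omega_iX_{\tilde{k}})=\mathrm{Supp}(X_{\tilde{k}})\cap V(\omega_i)$, a height-one prime in $\mathrm{Supp}(X_{\tilde{k}})$ would, by Krull's principal ideal theorem, force a one-dimensional prime into $\mathrm{Supp}(X_{\tilde{k}}/\omega_iX_{\tilde{k}})$ and hence make that quotient infinite; no structure theorem or pseudo-isomorphism bookkeeping is required.
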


In particular, 
if $k$ is an imaginary quadratic field of class number $1$, 
then $X_{\tilde{k}}\sim 0$ for all prime numbers $p$. 

Let $p=2$. 
By Dirichlet's theorem, 
there exist infinitely many prime numbers $q$ such that $q\equiv 7\bmod{8}$. 
Put $k=\Q(\sqrt{-q})$. 
Then, 
the prime $2$ splits in $k$, 
and by genus theory we have $2\nmid h_k$. 
Let $p=3$. 
By Nakagawa--Horie's theorem \cite{Nakagawa-Horie}, 
there exist infinitely many imaginary quadratic fields $k$ such that 
the prime $3$ splits in $k$ and that $3\nmid h_k$. 
Let $p\geq 5$. 
By Horie--\^{O}hnishi's theorem \cite{Horie-Ohnishi}, 
there exist infinitely many imaginary quadratic fields $k$ such that 
the prime $p$ splits in $k$ and that $p\nmid h_k$. 
Therefore, 
for each prime number $p$, 
there exist infinitely many of imaginary quadratic fields 
$k$ in which $p$ splits such that $p\nmid h_k$. 
We then have the following by proposition \ref{Minardi}. 

\begin{prop}\label{thm1}
Let $p$ be a prime number. 
Then, 
there exist infinitely many imaginary quadratic fields $k$ such that $p$ splits in $k$ 
and that $X_{\tilde{k}}\sim 0$. 
\end{prop}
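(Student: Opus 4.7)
The plan is to assemble the proposition directly from the ingredients gathered in the preceding paragraph, splitting into cases according to $p=2$, $p=3$, and $p\geq 5$, and in each case producing an infinite family of imaginary quadratic fields $k$ in which $p$ splits and $p\nmid h_k$; Minardi's result (proposition \ref{Minardi}) then delivers $X_{\tilde{k}}\sim 0$ for every such $k$.

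First I would handle $p=2$. By Dirichlet's theorem on primes in arithmetic progressions, there are infinitely many primes $q\equiv 7\pmod 8$. For each such $q$ set $k=\Q(\sqrt{-q})$. The congruence $-q\equiv 1\pmod 8$ ensures that $2$ splits in $k/\Q$, while genus theory (the class group of $k$ has $2$-rank equal to $t-1$, where $t$ is the number of ramified primes in $k/\Q$, which here is $1$) gives $2\nmid h_k$. Varying $q$ yields infinitely many non-isomorphic such $k$.

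Next I would treat $p=3$ and $p\geq 5$ simultaneously in spirit, but citing different sources. For $p=3$, the Nakagawa--Horie theorem \cite{Nakagawa-Horie} produces infinitely many imaginary quadratic fields $k$ in which $3$ splits and with $3\nmid h_k$. For $p\geq 5$, the Horie--\^{O}hnishi theorem \cite{Horie-Ohnishi} provides the analogous infinite family. In each case the families are infinite because the cited density results produce imaginary quadratic fields with squarefree discriminant varying over an infinite set.

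Finally, to every field $k$ produced above I apply proposition \ref{Minardi}: since $p\nmid h_k$ and $k$ is imaginary quadratic, $X_{\tilde{k}}\sim 0$. Combining the three cases covers all prime numbers $p$, completing the proof. There is no real obstacle here beyond correctly quoting the three existence theorems; the whole argument is a synthesis of known results, and the value of the statement lies in recording that this synthesis can be carried out uniformly in $p$ while additionally forcing the splitting condition on $p$.
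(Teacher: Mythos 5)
Your proposal is correct and follows exactly the paper's own argument: the case $p=2$ via Dirichlet's theorem and genus theory for $k=\Q(\sqrt{-q})$ with $q\equiv 7\bmod 8$, the case $p=3$ via Nakagawa--Horie, the case $p\geq 5$ via Horie--\^{O}hnishi, and then proposition \ref{Minardi} to conclude $X_{\tilde{k}}\sim 0$. Nothing to add.
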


By the way, 
in general, 
to understand whether $X_{\tilde{k}}=0$ or not is difficult. 
For each prime number $p$, 
we can find an infinite family of imaginary quadratic fields $k$ in which $p$ splits 
such that $X_{\tilde{k}}=0$. 
Let $\lambda_p(k)$ be the Iwasawa $\lambda$-invariant of the cyclotomic 
$\Z_p$-extension of $k$. 
When $p=2$, 
let $q$ be a prime number with $q\equiv 7\bmod{16}$ and let $k=\Q(\sqrt{-q})$. 
As explained in the above, 
the prime number $2$ splits in $k$, 
and it holds that $2\nmid h_k$. 
By the result proved by Ferrero \cite{Ferrero} and by Kida \cite{kida} independently, 
it holds that $\lambda_2(k)=1$. 
For odd prime numbers $p$, 
by theorem 1.1 of Byeon \cite{Byeon}, 
there exist infinitely many imaginary quadratic fields $k$ in which $p$ splits 
such that $\lambda_p(k)=1$. 
It is known that, 
if a prime number $p$ splits in an imaginary quadratic field $k$ and if 
$\lambda_p(k)=1$, 
then $X_{\tilde{k}}=0$. 
Thus, 
we have the following.

\begin{prop}\label{thm2}
Let $p$ be a prime number. 
Then there exist infinitely many imaginary quadratic fields $k$ in which $p$ splits 
such that $X_{\tilde{k}}=0$.
\end{prop}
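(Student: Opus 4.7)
The plan is to build the family by marrying three ingredients laid out in the preceding paragraph: a density input producing imaginary quadratic $k$ in which $p$ splits, a $\lambda$-invariant computation forcing $\lambda_p(k)=1$ in that subfamily, and the cited implication that, for imaginary quadratic $k$ with $p$ split, $\lambda_p(k)=1$ forces $X_{\til k}=0$.

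For the density-plus-$\lambda$ step I would split by $p$. When $p=2$, take $q$ ranging over primes with $q\equiv 7\pmod{16}$, an infinite family by Dirichlet's theorem, and set $k=\Q(\sqrt{-q})$: the congruence $-q\equiv 1\pmod 8$ forces $2$ to split in $k$, a standard genus-theory argument (the extension $k/\Q$ is ramified at the single prime $q$) gives $2\nmid h_k$, and the independent results of Ferrero \cite{Ferrero} and Kida \cite{kida} yield $\lambda_2(k)=1$. When $p$ is odd, Byeon's Theorem 1.1 in \cite{Byeon} already furnishes infinitely many imaginary quadratic $k$ in which $p$ splits with $\lambda_p(k)=1$, so nothing further is needed on the existence side. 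Since distinct $q$'s (resp. distinct $k$'s from Byeon) yield distinct fields, the family is automatically infinite.

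To conclude, for each $k$ in the resulting family I would invoke the known implication recalled just before the statement: when $p$ splits in an imaginary quadratic $k$ and $\lambda_p(k)=1$, one has $X_{\til k}=0$. Conceptually, writing $\til k=k_\infty\cdot k^{\mathrm{ac}}$ as the compositum of the cyclotomic and anticyclotomic $\Z_p$-extensions of $k$, both are totally ramified at distinct primes of $k$ above $p$; a control-type exact sequence relates $X_{\til k}/T_{\mathrm{ac}}X_{\til k}$ to the classical cyclotomic Iwasawa module, with the ramification in the anticyclotomic direction accounting for an entire $\lambda$-summand of rank $1$. Under $\lambda_p(k)=1$ there is no room left for a non-trivial coinvariant, and Nakayama's lemma over $\Lambda$ then forces $X_{\til k}=0$.

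The proof is essentially an assembly of black boxes, so there is no single hard step. If one wished to build it from scratch rather than cite, the most delicate point would be pinning down the control exact sequence precisely enough to extract the equality of $\lambda$-invariants in the third paragraph; the rest reduces to applying Dirichlet, genus theory, and the quoted results of Ferrero--Kida and Byeon.
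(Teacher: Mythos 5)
Your proposal is correct and follows essentially the same route as the paper: for $p=2$ take $k=\Q(\sqrt{-q})$ with $q\equiv 7\bmod{16}$ (Dirichlet, genus theory, and Ferrero--Kida giving $\lambda_2(k)=1$), for odd $p$ invoke Byeon's Theorem 1.1, and then apply the known implication that $p$ split in an imaginary quadratic $k$ together with $\lambda_p(k)=1$ forces $X_{\tilde{k}}=0$. The only difference is that you sketch a justification of that last implication, which the paper simply cites as known.
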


We are then interested in finding families of number fields $k$ such that 
$X_{\tilde{k}}\neq 0$ and $X_{\tilde{k}}\sim 0$. 
In this article, 
we show the following. 

\begin{thm}\label{main1}
Let $p$ be a prime number. 
Then there exist infinitely many imaginary abelian fields $k$ such that $X_{\tilde{k}}\neq 0$  
and $X_{\tilde{k}}\sim 0$.
\end{thm}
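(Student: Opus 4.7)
The plan is to start from Minardi's Proposition \ref{Minardi} for an imaginary quadratic base field and enlarge by a cyclic degree-$p$ extension ramified at two auxiliary primes, so that Chevalley's genus formula forces $p\mid h_k$ while pseudo-nullity is preserved. Concretely, for a fixed prime $p$ I would use Proposition \ref{thm2} to pick an imaginary quadratic field $k_0$ in which $p$ splits and with $p\nmid h_{k_0}$, so that $X_{\tilde{k}_0}\sim 0$. Then, for a varying pair of auxiliary primes $\ell_1, \ell_2$ satisfying $\ell_i\equiv 1\pmod{p}$, $\ell_i\neq p$, each $\ell_i$ inert in $k_0/\Q$, and suitable additional Chebotarev-type conditions to be fixed by the analysis below, I would let $F\subset\Q(\zeta_{\ell_1\ell_2})$ be a cyclic degree-$p$ extension of $\Q$ ramified at both $\ell_1$ and $\ell_2$, and set $k=k_0\cdot F$. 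Distinct admissible pairs give distinct imaginary abelian fields $k$ of degree $2p$, and by Chebotarev infinitely many such pairs exist.

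For $X_{\tilde{k}}\neq 0$, I would apply Chevalley's ambiguous class number formula to the cyclic $p$-extension $k/k_0$: the two primes of $k_0$ above $\ell_1, \ell_2$ are each totally ramified in $k/k_0$, contributing a ramification factor $p^2$; combined with $p\nmid h_{k_0}$, $[k:k_0]=p$, and the unit index being coprime to $p$ for odd $p$ (and generic $k_0$), this gives a non-trivial $p$-part in the ambiguous ideal class group of $k/k_0$, and in particular $p\mid h_k$. An auxiliary Chebotarev condition on the $\ell_i$ prevents the corresponding class from capitulating in $\tilde{k}/k$, yielding $X_{\tilde{k}}\neq 0$. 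The case $p=2$ requires a slightly different setup (e.g.\ three auxiliary primes or a modified base field) but proceeds analogously.

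For $X_{\tilde{k}}\sim 0$, I would decompose $X_{\tilde{k}}\otimes_{\Z_p}\Z_p[\zeta_p]$ by characters of $\ga(k/k_0)\simeq\Z/p$. The trivial-character component is controlled by $X_{\tilde{k}_0}$ via a norm-type map on Iwasawa modules, and is therefore pseudo-null by Proposition \ref{Minardi}. The non-trivial character components are controlled by the decomposition and inertia subgroups at primes above $\ell_1, \ell_2$ inside $\ga(L_{\tilde{k}}/\tilde{k})$; the auxiliary Chebotarev conditions on the $\ell_i$ should ensure that these subgroups produce two coprime annihilators in $\Lambda$ of each non-trivial component, forcing pseudo-nullity.

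The hard part will be the pseudo-nullity of the non-trivial character components, since pseudo-nullity does not automatically ascend through abelian extensions. One has to convert the ramification data at $\ell_1, \ell_2$ in the multiple $\Z_p$-tower $\tilde{k}/k$ into explicit coprime annihilators of each component. The splitting of $p$ in $k_0$, which gives $\tilde{k}_0/k_0$ its rank-$2$ structure needed by Minardi, together with the Chebotarev conditions on the $\ell_i$ (guaranteeing that decomposition subgroups have the requisite rank in $\ga(\tilde{k}/k)$), should combine to yield these annihilators.
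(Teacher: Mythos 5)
Your strategy has a genuine gap at exactly the point you flag as ``the hard part,'' and it is not a technical detail that can be filled in: it is the crux. By engineering $p\mid h_k$ via Chevalley's formula for a degree-$p$ extension ramified at two primes $\ell_1,\ell_2$, you place yourself outside the reach of every known sufficient criterion for pseudo-nullity of $X_{\tilde{k}}$ --- Minardi's proposition \ref{Minardi}, Itoh's quartic analogue, and proposition \ref{prop1} (theorem 1 of \cite{Fujii}) all require $p\nmid h_k$ (together with vanishing Iwasawa invariants of the real subfield in the last case). Your proposed substitute --- a character decomposition of $X_{\tilde{k}}\otimes\Z_p[\zeta_p]$ under $\ga(k/k_0)$ with ``two coprime annihilators'' extracted from decomposition data at $\ell_1,\ell_2$ --- is a research program, not an argument: the primes $\ell_i$ do not lie above $p$, so they are unramified and finitely decomposed in the $\Z_p^{p+1}$-extension $\tilde{k}/k$, and there is no mechanism in the proposal that converts their Frobenius data into annihilators avoiding \emph{all} height-one primes of $\Lambda$. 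The non-triviality step is also incomplete: $p\mid h_k$ does not give $X_{\tilde{k}}\neq 0$, since $X_{\tilde{k}}$ is a projective limit and the ambiguous classes you construct must survive (be hit by norm-compatible systems) all the way up the tower; the ``auxiliary Chebotarev condition preventing capitulation'' is asserted but never identified.

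The paper's proof inverts your logic and thereby avoids both problems. It keeps $p\nmid h_k$ throughout: for $p\geq 3$ it takes $k=Fk^+$ with $F$ imaginary quadratic, $p$ split, $p\nmid h_F$, and $k^+\subset\Q(\mu_\ell)$ of degree $p$ ramified at a \emph{single} prime $\ell$ chosen (via Chebotarev in $F\Q_1\Q(\mu_p,\sqrt[p]{p})/\Q$) so that $p$ splits completely in $k^+$, lemma \ref{Iwasawa56} forces $p\nmid h_{k^+}$, $p\nmid h_{k_1^+}$ and $p\nmid h_k$, and Fukuda's theorem kills all Iwasawa invariants of $k^+_\infty/k^+$; proposition \ref{prop1} then gives $X_{\tilde{k}}\sim 0$ directly. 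Non-triviality comes not from the class group of $k$ but from the size of the unramified extension $\tilde{k}/k_\infty\simeq\Z_p^p$: for $p>3$ lemma \ref{p=3} (via Ozaki's theorem C, comparing the $\Z_p$-rank $r_2$ of $E/(\gamma-1)E$ with the rank $r_2(r_2-1)/2$ of $X_{K_\infty}\wedge X_{K_\infty}$) shows $X_{\tilde{k}}=0$ would force $[k:\Q]\leq 6$, while for $p=2,3$ Kida-type $\lambda$-computations give a surjection of $X_{\tilde{k}}$ onto a free $\Z_p$-module of positive rank. If you want to salvage your approach, you would first need a pseudo-nullity criterion valid when $p\mid h_k$, which is precisely what is missing from the literature the paper relies on.
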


Some results on sufficient conditions of the pseudo-nullity of $X_{\tilde{k}}$ 
for abelian fields or CM-fields $k$ under elementary situations had been obtained, 
see \cite{Minardi}, \cite{Itoh} and \cite{Fujii}. 
However, 
as stated in the above, 
even if we know the pseudo-nullity of $X_{\tilde{k}}$, 
further to know whether $X_{\tilde{k}}=0$ or not is basically difficult. 
Hence, 
the author thinks that what the existence of families of imaginary abelian fields of theorem 
\ref{main1} are guaranteed is important in the study of GGC. 
As we will see in the proof of theorem \ref{main1}, 
we choose $k$ as imaginary quadratic fields if $p=2$, 
and as imaginary cyclic fields of degree $2p$ if $p>2$. 
For each odd prime number $p$, 
to find an infinite family of imaginary quadratic fields $k$ such that 
$X_{\tilde{k}}\neq 0$ and $X_{\tilde{k}}\sim 0$ is an interesting problem. 

In section $3$, 
we will give the proof of theorem \ref{main1}. 
In section $4$, 
we will give an application to non-abelian Iwasawa theory in the sense of Ozaki \cite{Ozaki}. 

\section{Proof of theorem \ref{main1}}
In this section, 
we prove theorem \ref{main1}. 
For a number field $K$ and a prime number $p$, 
let $K_{\infty}/K$ be the cyclotomic $\Z_p$-extension 
and $\lambda_p(K)$ the Iwasawa $\lambda$-invariant of $K_{\infty}/K$. 
We need the following well known lemma. 

\begin{lem}\label{some}
Let $p$ be a prime number and $K/\Q$ a finite extension. 
Suppose that $p$ splits completely in $K/\Q$. 
Then $\tilde{K}/K_{\infty}$ is an unramified abelian pro-$p$ extension. 
\end{lem}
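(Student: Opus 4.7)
Since $\ga(\til{K}/K)$ is abelian pro-$p$, the same is true of $\ga(\til{K}/\Ki)$; only the unramifiedness of $\til{K}/\Ki$ needs to be proved. Every $\Z_p$-extension of $K$ is unramified outside the primes of $K$ above $p$, so the same holds for $\til{K}/K$ and hence for $\til{K}/\Ki$. Write $G=\ga(\til{K}/K)$ and $H=\ga(\til{K}/\Ki)$, and fix a prime $\f{p}$ of $K$ dividing $p$ together with a prime $\til{\f{p}}$ of $\til{K}$ above it. Because $G$ is abelian, the inertia subgroup $I_{\f{p}}\subseteq G$ does not depend on the choice of $\til{\f{p}}$, and the inertia of $\til{K}/\Ki$ at the prime of $\Ki$ below $\til{\f{p}}$ is exactly $I_{\f{p}}\cap H$. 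Thus the proof reduces to showing $I_{\f{p}}\cap H=0$.

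The splitting hypothesis gives $K_{\f{p}}=\Q_p$. By local class field theory, the compositum of all $\Z_p$-extensions of $\Q_p$ is a $\Z_p^{2}$-extension, in which the inertia subgroup is the image under the reciprocity map of the $\Z_p$-torsion-free quotient of $\Z_p^{\times}$; using the decomposition $\Z_p^{\times}\cong\mu_{p-1}\times(1+p\Z_p)$ for $p$ odd, and $\Z_2^{\times}\cong\{\pm 1\}\times(1+4\Z_2)$ for $p=2$, this inertia subgroup is a free $\Z_p$-module of rank exactly one. Since the completion $\til{K}_{\til{\f{p}}}$ is a $\Z_p$-power extension of $K_{\f{p}}$ and thus sits inside the compositum of $\Z_p$-extensions of $K_{\f{p}}$, the subgroup $I_{\f{p}}$ is a continuous quotient of this local inertia, hence a continuous quotient of $\Z_p$. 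As $I_{\f{p}}$ is also a closed subgroup of the torsion-free $\Z_p$-module $G\cong\Z_p^{r_2+1+\delta}$, it must be either trivial or isomorphic to $\Z_p$.

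On the other hand, the cyclotomic $\Z_p$-extension $\Ki/K$ is (eventually) totally ramified at $\f{p}$, so the restriction
$$
I_{\f{p}}\lo G/H=\ga(\Ki/K)\cong\Z_p
$$
is surjective. This forces $I_{\f{p}}\cong\Z_p$, and a surjection between two copies of $\Z_p$ is necessarily an isomorphism; hence $I_{\f{p}}\cap H=\ker(I_{\f{p}}\to G/H)=0$, completing the proof. The only non-formal step is the local computation that the inertia in the compositum of all $\Z_p$-extensions of $\Q_p$ has $\Z_p$-rank exactly one; the remaining ingredients are just rank comparisons inside the free $\Z_p$-module $G$.
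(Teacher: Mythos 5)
Your proof is correct and follows essentially the same route as the paper: both arguments reduce to showing that the inertia group at a prime above $p$ in $\ga(\tilde{K}/K)$ is (a quotient of) $\Z_p$, using that the prime has degree one, and then use the ramification of $K_{\infty}/K$ at that prime to conclude the restriction to $\ga(K_{\infty}/K)$ is injective. You merely spell out the local class field theory computation that the paper compresses into the assertion $I\simeq\Z_p$.
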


\begin{proof}
Let $\f{p}$ be a prime of $K$ lying above $p$ and 
$I$ the inertia subgroup of $\f{p}$ in $\tilde{K}/K$. 
Then $\f{p}$ is unramified in $K/\Q$ and has degree $1$. 
This implies that $I\simeq \Z_p$. 
Since $K_{\infty}=K\Q_{\infty}$ and $\Q_{\infty}/\Q$ is totally ramified at $p$, 
$K_{\infty}/K$ is ramified at $\f{p}$. 
Thus the restriction map $I\to \ga(K_{\infty}/K)$ is injective, 
and hence $\tilde{K}/K_{\infty}$ is unramified at all primes lying above $\f{p}$. 
Since $\tilde{K}/K$ is unramified outside primes lying above $p$, 
we can conclude that $\tilde{K}/K_{\infty}$ is unramified at all primes. 
\end{proof} 

First, 
suppose that $p=2$. 
Let $q$ be a prime number such that $q\equiv 15 \bmod{16}$ and 
put $k=\Q(\sqrt{-q})$. 
Then, 
the prime $2$ splits in $k$, 
and by genus theory, 
we have $2\nmid h_k$. 
Thus $X_{\tilde{k}}$ is a pseudo-null $\Lambda$-module by proposition \ref{Minardi}.
Furthermore, 
by the result of Ferrero \cite{Ferrero} and Kida \cite{kida}, 
it holds that $\lambda_2(k)\geq 3$. 
Since $\tilde{k}/k_{\infty}$ is an unramified $\Z_2$-extension by lemma \ref{some}, 
there is a surjective morphism $X_{\tilde{k}} \to \Z_2^2$. 
Therefore it holds that $X_{\tilde{k}}\neq 0$. 

Next, 
suppose that $p\geq 3$. 
We show the following. 

\begin{thm}\label{mainthm2}
Let $p$ be an odd prime number. 
Then there exist infinitely many imaginary cyclic fields $k$ of degree $2p$ 
in which $p$ splits completely such that $X_{\tilde{k}}\neq 0$ and $X_{\tilde{k}}\sim 0$. 
\end{thm}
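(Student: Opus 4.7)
The plan is to build $k$ as a composite $k = k_0 F$, where $k_0 = \Q(\sqrt{-q})$ is an imaginary quadratic field furnished by Proposition \ref{thm1} and $F/\Q$ is a real cyclic extension of degree $p$, so that the pseudo-nullity of $X_{\tilde{k_0}}$ can be propagated to $X_{\tilde{k}}$ via a criterion from the literature, and so that the Iwasawa--Kida formula forces $\lambda_p(k) > p = r_2(k)$, which combined with Lemma \ref{some} yields $X_{\tilde{k}} \neq 0$.

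For the construction, I start with $k_0 = \Q(\sqrt{-q})$ in which $p$ splits and $p \nmid h_{k_0}$, so that $X_{\tilde{k_0}} \sim 0$ by Proposition \ref{Minardi}; infinitely many such $q$ exist by Proposition \ref{thm1}. I then pick a prime $\ell \neq p, q$ with $\ell \equiv 1 \bmod p$ and let $F$ be the unique degree-$p$ subfield of $\Q(\zeta_\ell)$. I impose two further conditions on $\ell$: that $p$ is a $p$-th power modulo $\ell$, so that $p$ splits completely in $F$, and that $\left(\frac{-q}{\ell}\right) = -1$, so that $\ell$ is inert in $k_0/\Q$. Each condition is a splitting condition in a single number field, so Chebotarev's theorem produces infinitely many admissible $\ell$ for each $q$. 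The composite $k = k_0 F$ is then imaginary cyclic of degree $2p$ in which $p$ splits completely, and varying $(q, \ell)$ yields the required infinite family.

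For $X_{\tilde{k}} \sim 0$, which I expect to be the main obstacle, I would apply a sufficient criterion for pseudo-nullity of the unramified Iwasawa module of a CM-field of the sort found in \cite{Itoh} and \cite{Fujii}. The hypotheses of such criteria typically involve pseudo-nullity of $X_{\tilde{k_0}}$ over the quadratic base (secured here by Proposition \ref{Minardi}), complete splitting of $p$ in $k$, and controlled behavior of the remaining ramified primes; the inert behavior of $\ell$ in $k_0/\Q$ is chosen in part to meet such conditions, and one expects a slight variant of the criteria in the cited papers to apply uniformly to the whole family.

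For $X_{\tilde{k}} \neq 0$, I invoke the Iwasawa--Kida formula for the $p$-extension $k/k_0$ of CM-fields. Since $k$ is abelian over $\Q$, $\mu_p(k_\infty) = 0$ by Ferrero--Washington, and for $p \geq 5$ (or $p = 3$ with $q \neq 3$) one checks that $\mu_p \not\subseteq k_{0,\infty}$, so the formula reduces to
\[
\lambda_p^-(k) \;=\; p\,\lambda_p^-(k_0) \;+\; \sum_{w}(e_w - 1),
\]
where $w$ runs over primes of $k_\infty$ not above $p$, ramified in $k_\infty/k_{0,\infty}$, and inert in $k_\infty/k_\infty^+ = k_\infty/F_\infty$. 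Applying Lemma \ref{some} to $k_0$ produces a surjection $X_{k_{0,\infty}} \twoheadrightarrow \ga(\tilde{k_0}/k_{0,\infty}) \simeq \Z_p$, so $\lambda_p^-(k_0) = \lambda_p(k_0) \geq 1$. Since $\ell$ is inert in $k_0$ and totally ramifies in $F$, each prime of $k_\infty$ above $\ell$ is inert in $k_\infty/F_\infty$ with ramification index $p$ over $k_{0,\infty}$, contributing at least $p - 1$ to the sum. Therefore
\[
\lambda_p(k) \;\geq\; \lambda_p^-(k) \;\geq\; 2p - 1 \;>\; p \;=\; r_2(k).
\]
Applying Lemma \ref{some} to $k$, the extension $\tilde{k}/k_\infty$ is unramified abelian pro-$p$ with Galois group $\Z_p^{p}$, so $\tilde{k} \subseteq L_{k_\infty}$ and we obtain an exact sequence
\[
0 \lo \ga(L_{k_\infty}/\tilde{k}) \lo X_{k_\infty} \lo \ga(\tilde{k}/k_\infty) \simeq \Z_p^{p} \lo 0.
\]
As $X_{k_\infty}$ has $\Z_p$-rank $\lambda_p(k) \geq 2p - 1 > p$, the kernel $\ga(L_{k_\infty}/\tilde{k})$ has positive $\Z_p$-rank, hence is non-trivial. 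Since $X_{\tilde{k}}$ surjects onto $\ga(L_{k_\infty}/\tilde{k})$, we conclude $X_{\tilde{k}} \neq 0$.
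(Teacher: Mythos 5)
Your construction is essentially the paper's (an imaginary quadratic field $k_0$ with $p$ split and $p\nmid h_{k_0}$, composed with the real cyclic degree-$p$ subfield $F$ of $\Q(\mu_{\ell})$ for a Chebotarev-chosen $\ell$), but both substantive verifications have genuine gaps. For the pseudo-nullity, the criterion actually available is Proposition \ref{prop1} (Theorem 1 of \cite{Fujii}), and its hypotheses are not ``pseudo-nullity over the quadratic subfield plus controlled ramification'': they are that $p$ splits completely in $k$, that $p\nmid h_k$, and that all Iwasawa invariants of the cyclotomic $\Z_p$-extension of $k^+=F$ vanish, i.e.\ Greenberg's conjecture for the real degree-$p$ field. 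You verify neither of the last two. The condition $p\nmid h_k$ does follow from your choices via Lemma \ref{Iwasawa56} (as $\ell$ is inert in $k_0$ and $k/k_0$ is totally ramified at the unique prime above $\ell$), but Greenberg's conjecture for $F$ is the real work: the paper secures it by imposing the additional condition that $\ell$ be inert in the first layer $\Q_1$ of the cyclotomic $\Z_p$-extension of $\Q$, so that Lemma \ref{Iwasawa56} gives $p\nmid h_{F}$ and $p\nmid h_{F_1}$ and Fukuda's theorem \cite{Fukuda} then gives $p\nmid h_{F_n}$ for all $n$. Without that extra condition on $\ell$ and that argument, the claim $X_{\tilde{k}}\sim 0$ is unsupported; no propagation-from-$k_0$ criterion of the form you hypothesize appears in \cite{Itoh} or \cite{Fujii}.

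For the non-triviality, you have Kida's formula backwards: the local terms $\sum_w(e_w-1)$ come from the primes of $k_{\infty}$ not above $p$ that \emph{split} in $k_{\infty}/k_{\infty}^+$, not those inert there. Since $\ell$ is inert in $k_0/\Q$, the primes above $\ell$ are inert in $k_{\infty}/k_{\infty}^+=k_{\infty}/F_{\infty}$ and contribute nothing, so the formula gives exactly $\lambda_p^-(k)=p\,\lambda_p^-(k_0)$; this is consistent with the paper's own computation for $p=3$, where $\lambda_3(\Q(\sqrt{-47}))=2$ yields $\lambda_3(k)=6$ with no $\ell$-contribution. As Proposition \ref{thm1} only guarantees $\lambda_p(k_0)\geq 1$ (and by Proposition \ref{thm2} equality holds for infinitely many choices), your bound degrades to $\lambda_p(k)\geq p=r_2(k)$, which is precisely the threshold at which the exact sequence $0\to\ga(L_{k_{\infty}}/\tilde{k})\to X_{k_{\infty}}\to\Z_p^{\,p}\to 0$ yields no conclusion. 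This is why the paper argues differently: for $p=3$ it fixes a specific imaginary quadratic field with $\lambda_3=2$ so that $p\lambda_p^-=6>3=r_2$, and for $p>3$ it abandons $\lambda$-invariants altogether and uses Lemma \ref{p=3} (Ozaki's Theorem C together with the unit computation of Lemma \ref{rank}) to show that $X_{\tilde{k}}=0$ would force $[k:\Q]\leq 6$.
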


In principle, 
for each odd prime number $p$, 
we can find an imaginary abelian field $k$ which satisfies the conditions of theorem \ref{mainthm2}. 

From here, 
we begin the proof of theorem \ref{mainthm2}. 
Let $p$ be an odd prime number. 
We need the following. 

\begin{prop}[Theorem $1$ of \cite{Fujii}]\label{prop1}
Let $p$ be an odd prime number, 
$k$ a CM-field of degree greater than $2$ and $k^+$ the maximal totally real subfield of $k$. 
Suppose that, 
the prime number $p$ splits completely in $k/\Q$, 
$p\nmid h_k$, 
and all of the Iwasawa $\lambda$-, 
$\mu$- and $\nu$-invariants of the cyclotomic $\Z_p$-extension $k_{\infty}^+$ of 
$k^+$ are $0$. 
Then $X_{\tilde{k}}\sim 0$. 
\end{prop}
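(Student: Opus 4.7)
The plan is to realize $k$ as a compositum $k=FK$, where $K$ is a suitable real cyclic $p$-extension of $\Q$ playing the role of $k^+$, and $F$ varies through an infinite family of imaginary quadratic fields. Proposition~\ref{prop1} will then deliver $X_{\tilde{k}}\sim 0$, while a Kida-type lower bound on $\lambda_p(k)$, combined with the surjection argument already used in the $p=2$ case, will force $X_{\tilde{k}}\neq 0$.

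To construct $K$ I would fix an auxiliary prime $\ell\neq p$ with $\ell\equiv 1\pmod p$ and with $p$ a $p$-th power modulo $\ell$, and take $K$ to be the unique degree-$p$ subfield of $\Q(\zeta_\ell)$. Then $K/\Q$ is totally real, cyclic, totally ramified at $\ell$ and unramified elsewhere, and $p$ splits completely in $K$; Lemma~\ref{Iwasawa56} gives $p\nmid h_K$. The remaining requirement $\lambda_p(K)=\mu_p(K)=\nu_p(K)=0$ is Greenberg's conjecture for $K$, to be secured by a careful choice of $\ell$ (for instance via Fukuda's $p$-class-number stability criterion in the cyclotomic tower). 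For $F$, the theorems of Nakagawa--Horie ($p=3$) and Horie--\^{O}hnishi ($p\geq 5$) supply infinitely many imaginary quadratic fields in which $p$ splits and whose class numbers are coprime to $p$; adding the congruence condition that $\ell$ be inert in $F$ (and, when $p=3$, excluding $F=\Q(\sqrt{-3})$) still leaves infinitely many. Since $\gcd(2,p)=1$, $F\cap K=\Q$, so $k:=FK$ is imaginary cyclic of degree $2p$ with $k^+=K$, and $p$ splits completely in $k$. The extension $k/F$ is cyclic of degree $p$, totally ramified at the unique prime of $F$ above $\ell$ and unramified elsewhere, so Lemma~\ref{Iwasawa56} applied to $k/F$ gives $p\nmid h_k$. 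All hypotheses of Proposition~\ref{prop1} now hold, and we conclude $X_{\tilde{k}}\sim 0$ for each of the infinitely many $k$ so produced.

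For the nontriviality I would mimic the $p=2$ argument. Brumer's theorem gives Leopoldt for the abelian field $k$, so $\ga(\tilde{k}/k_\infty)\cong\Z_p^p$. Lemma~\ref{some} makes $\tilde{k}/k_\infty$ unramified abelian, hence $\tilde{k}\subseteq L_{k_\infty}\subseteq L_{\tilde{k}}$ and there is a surjection $X_{\tilde{k}}\twoheadrightarrow \ga(L_{k_\infty}/\tilde{k})$ whose target has $\Z_p$-rank $\lambda_p(k)-p$. Kida's formula for the $p$-extension $k/F$ of CM-fields (valid since $\mu_p(F)=0$ by Ferrero--Washington and $\zeta_p\notin F$) reads
\[
\lambda_p^-(k)\;=\;p\,\lambda_p(F)\;+\;\sum_w(e_w-1)\;\geq\;p\cdot 1+(p-1)\;=\;2p-1,
\]
using $\lambda_p(F)\geq 1$ (because $p$ splits in the imaginary quadratic $F$) and the ramification contribution at primes above $\ell$. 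Combined with $\lambda_p^+(k)=\lambda_p(K)=0$ this forces $\lambda_p(k)\geq 2p-1>p$, so the target of the surjection is nonzero and $X_{\tilde{k}}\neq 0$. The main obstacle is the very first step, exhibiting a real cyclic $p$-field $K$ in which $p$ splits completely and for which Greenberg's conjecture holds: Lemma~\ref{Iwasawa56} cannot be invoked for $K_\infty/K$, which is ramified at $p$ distinct primes above $p$, so the vanishing $\lambda_p(K)=\mu_p(K)=\nu_p(K)=0$ must be obtained either by direct class-number computation for specific $\ell$ or by quoting density/stability results for Greenberg's conjecture in real cyclic $p$-towers.
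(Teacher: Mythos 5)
Your proposal does not prove the statement in question; it proves a different one. Proposition~\ref{prop1} is a general implication: \emph{for every} CM-field $k$ of degree greater than $2$ in which $p$ splits completely, with $p\nmid h_k$ and with vanishing Iwasawa invariants for $k^+_\infty/k^+$, one has $X_{\tilde{k}}\sim 0$. What you have written is a construction of an infinite family of particular fields $k=FK$ satisfying these hypotheses, followed by the sentence ``Proposition~\ref{prop1} will then deliver $X_{\tilde{k}}\sim 0$.'' That is circular as a proof of Proposition~\ref{prop1}: you are invoking the very statement you were asked to establish. What your outline actually addresses is Theorem~\ref{mainthm2} (the existence of infinitely many imaginary cyclic fields of degree $2p$ with $X_{\tilde{k}}\neq 0$ and $X_{\tilde{k}}\sim 0$), which in the paper is a separate result \emph{deduced from} Proposition~\ref{prop1}. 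Indeed your construction ($K$ the degree-$p$ subfield of $\Q(\zeta_\ell)$ for a prime $\ell\equiv 1\bmod p$ with $p$ a $p$-th power mod $\ell$, $F$ an imaginary quadratic field with $p$ split and $p\nmid h_F$, class numbers controlled by Lemma~\ref{Iwasawa56} and Fukuda's stability) closely parallels Section~3 of the paper; the only genuine divergence is that you get nontriviality of $X_{\tilde{k}}$ from a Kida-formula lower bound $\lambda_p(k)\geq 2p-1>p$, whereas the paper uses such a $\lambda$-computation only for $p=3$ and for $p>3$ instead proves Lemma~\ref{p=3}, which bounds $[k:\Q]\leq 6$ whenever $X_{\tilde{k}}=0$. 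But none of this bears on Proposition~\ref{prop1} itself.

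A proof of Proposition~\ref{prop1} cannot proceed by exhibiting examples: it must take an \emph{arbitrary} CM-field satisfying the hypotheses and show pseudo-nullity of $X_{\tilde{k}}$ over the full $(r_2+1)$-variable Iwasawa algebra $\Lambda=\Z_p[\![\ga(\tilde{k}/k)]\!]$. The substance of such an argument (this is Theorem~1 of the author's earlier paper \cite{Fujii}, which the present paper quotes without reproving) lies in using $p\nmid h_k$ and the complete splitting of $p$ to control the inertia and decomposition behaviour in $\tilde{k}/k_\infty$, and using Greenberg's conjecture for $k^+$ (the vanishing of $\lambda$, $\mu$, $\nu$) to kill the plus part, so that $X_{\tilde{k}}$ is seen to have annihilators outside every height-one prime. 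Nothing in your proposal engages with this; the gap is not a missing step but the absence of the entire argument for the assertion as stated.
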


{\bf Remarks.} 
$(1)$ In \cite{Itoh}, 
Itoh showed a result for quartic imaginary abelian fields precisely analogous to proposition 
\ref{Minardi}. 
Proposition \ref{prop1} is a generalization of Itoh's result.
\\
$(2)$ 
In theorem $1$ of \cite{Fujii}, 
the author put the assumption that Leopoldt's conjecture holds for 
$p$ and $k^+$. 
The author must remark here that, 
if an odd prime number $p$ splits completely in $k^+/\Q$, 
Leopoldt's conjecture for $p$ and $k^+$ is implied by the vanishing of 
Iwasawa invariants $\lambda$ and $\mu$ of $k_{\infty}^+/k^+$, 
that is, GC for $p$ and $k$ holds. 
For this, 
see proposition $1$ of \cite{ozaki1997}.  
Hence, 
in theorem $1$ of \cite{Fujii}, 
the assumption on Leopoldt's conjecture is not needed. 
\\

By proposition \ref{prop1}, 
to prove theorem \ref{mainthm2}, 
it suffices to find infinitely many imaginary abelian fields $k$ of degree $2p$ 
which satisfy the conditions of proposition \ref{prop1} and $X_{\tilde{k}}\neq 0$.

When $p=3$, 
put $F=\Q(\sqrt{-47})$. 
It is known that $h_{F}=5$ (the author checked by using Pari/gp \cite{PARI2}). 
When $p\geq 5$, 
By Horie-\^{O}hnishi's result \cite{Horie-Ohnishi}, 
there is an imaginary quadratic field $F$ such that the prime $p$ splits in $F/\Q$ and 
that $p\nmid h_{F}$. 
Alternatively, 
Ito \cite{Ito} showed that the class numbers of imaginary quadratic fields 
$\Q(\sqrt{1-p})$ and $\Q(\sqrt{4-p})$ 
are not divisible by $p$, 
see lemma $2.4$ of \cite{Ito}, 
hence we can choose $F$ as $\Q(\sqrt{1-p})$ or $\Q(\sqrt{4-p})$. 
We fix once such an imaginary quadratic field $F$ for each odd prime number $p$. 

For a positive integer $n$, 
let $\mu_n$ be the group of $n$-th roots of unity. 
For each non-negative integer $n$ and a number field $K$, 
let $K_n$ be the $n$-th layer of the cyclotomic $\Z_p$-extension $K_{\infty}/K$. 

\begin{lem}\label{lem2}
Restrictions of Galois groups induce the following isomorphism
$$
\ga(F\Q_1\Q(\mu_p,\sqrt[p]{p})/\Q)
\simeq 
\ga(F/\Q)
\times
\ga(\Q_1/\Q)
\times
\ga(\Q(\mu_p,\sqrt[p]{p})/\Q),
$$
where $\Q_1$ denotes the $1$-st layer of the cyclotomic $\Z_p$-extension of $\Q$. 
\end{lem}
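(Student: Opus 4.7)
The plan is to show the natural restriction map $\ga(F\Q_1\Q(\mu_p,\sqrt[p]{p})/\Q) \to \ga(F/\Q) \times \ga(\Q_1/\Q) \times \ga(\Q(\mu_p,\sqrt[p]{p})/\Q)$ is an isomorphism. Injectivity is automatic, since an element fixing each factor fixes the compositum. By comparing orders, surjectivity is equivalent to the equality $[F\Q_1\Q(\mu_p,\sqrt[p]{p}):\Q] = 2\cdot p\cdot p(p-1)$, which by the tower formula amounts to the two linear-disjointness statements $F\cap \Q_1\Q(\mu_p,\sqrt[p]{p}) = \Q$ and $\Q_1\cap\Q(\mu_p,\sqrt[p]{p}) = \Q$.

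The first disjointness I would dispatch by a ramification argument. The extension $\Q(\mu_p,\sqrt[p]{p})/\Q$ is ramified only at $p$ (tame ramification in the cyclotomic piece and totally ramified Kummer-type ramification above it), and $\Q_1/\Q$ is a subextension of $\Q_{\infty}/\Q$ which is also ramified only at $p$; hence the compositum $\Q_1\Q(\mu_p,\sqrt[p]{p})/\Q$ is unramified outside $p$. By our choice of $F$, the prime $p$ splits in $F/\Q$, so $F$ is unramified at $p$ while being ramified at some rational prime (dividing its nontrivial discriminant). Any subfield of $F\cap \Q_1\Q(\mu_p,\sqrt[p]{p})$ is a subfield of $F$ unramified outside $p$, so it must equal $\Q$.

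The main obstacle is the second disjointness $\Q_1\cap\Q(\mu_p,\sqrt[p]{p}) = \Q$, since both fields are ramified only at $p$. Here I would switch to a group-theoretic argument. Since $[\Q_1:\Q]=p$ is prime, the intersection is either $\Q$ or all of $\Q_1$; in the latter case, $\Q_1$ would be an abelian subextension of $\Q(\mu_p,\sqrt[p]{p})/\Q$ of degree $p$. But $\ga(\Q(\mu_p,\sqrt[p]{p})/\Q)$ is the semidirect product $\Z/p\Z \rtimes (\Z/p\Z)^{\times}$, in which the normal subgroup $\ga(\Q(\mu_p,\sqrt[p]{p})/\Q(\mu_p))\cong \Z/p\Z$ lies in the commutator subgroup (the action of $(\Z/p\Z)^\times$ on it being nontrivial). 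Thus the abelianization has order $p-1$, and since $p\nmid p-1$, no abelian quotient has order divisible by $p$, giving the required contradiction.

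Combining the two disjointness results yields $[F\Q_1\Q(\mu_p,\sqrt[p]{p}):\Q] = 2p^2(p-1)$, matching the order of the product group, and the restriction map is therefore an isomorphism.
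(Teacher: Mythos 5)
Your proof is correct, and it reorganizes the argument in a way that differs from the paper's in both the decomposition and the supporting facts. The paper first notes $F\cap\Q_1=\Q$ by degrees, then computes $(F\Q_1)\cap\Q(\mu_p,\sqrt[p]{p})$: since $F\Q_1/\Q$ is abelian, this intersection lands in the maximal abelian subextension $\Q(\mu_p)$, and then neither $\Q_1$ (degree reason, $p\nmid p-1$) nor $F$ (because $\Q(\mu_p)/\Q$ is totally ramified at $p$ while $p$ splits in $F$) can sit inside it, so it is $\Q$. You instead peel off $F$ last: your handling of $\Q_1\cap\Q(\mu_p,\sqrt[p]{p})$ via the abelianization of $\Z/p\Z\rtimes(\Z/p\Z)^{\times}$ having order $p-1$ is really the same underlying fact as the paper's reduction to $\Q(\mu_p)$ (the maximal abelian subextension has degree $p-1$, which is prime to $p$), just stated group-theoretically rather than field-theoretically. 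Where you genuinely diverge is the $F$-disjointness: the paper exploits ramification \emph{at} $p$ (total ramification of $\Q(\mu_p)/\Q$ versus $p$ splitting in $F$), whereas you exploit ramification \emph{away from} $p$ ($F$, being unramified at $p$ yet not equal to $\Q$, must ramify at some other prime, while $\Q_1\Q(\mu_p,\sqrt[p]{p})/\Q$ is unramified outside $p$). Your version leans on the nontriviality of the discriminant of $F$ (a Minkowski-type input) and has the small advantage of not needing to first cut the intersection down to $\Q(\mu_p)$; the paper's version avoids invoking anything about primes other than $p$ and $\ell$. Both are complete; just make sure you justify, as you implicitly do, that $\Q(\mu_p,\sqrt[p]{p})/\Q$ is unramified outside $p$, which the paper never needs.
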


\begin{proof}
Since $[F:\Q]=2$ and $[\Q_1:\Q]=p>2$, 
it holds that 
$F\cap \Q_1=\Q$. 
Note that $\Q(\mu_p)/\Q$ is abelian and 
$\Q(\mu_p,\sqrt[p]{p})/\Q$ is non-abelian. 
This implies that 
$$
(F\Q_1)\cap \Q(\mu_p,\sqrt[p]{p}) 
=
(F\Q_1)\cap \Q(\mu_p).
$$
Since $[\Q(\mu_p):\Q]=p-1$, 
it holds that $\Q_1\not\subseteq (F\Q_1)\cap \Q(\mu_p)$. 
If $F\subseteq (F\Q_1)\cap \Q(\mu_p)$ then $F$ is a subfield 
of $\Q(\mu_p)$ which is unramified at $p$. 
This contradicts to the fact that $\Q(\mu_p)/\Q$ is totally ramified at $p$. 
Hence $F\not\subseteq (F\Q_1)\cap \Q(\mu_p)$. 
Therefore, 
it holds that
$$
(F\Q_1)\cap \Q(\mu_p,\sqrt[p]{p})=(F\Q_1)\cap \Q(\mu_p)=\Q.
$$
This completes the proof. 
\end{proof}

By the Chebotarev density theorem and lemma \ref{lem2}, 
there exist infinitely many prime numbers $\ell$ such that all of the following 
three conditions are satisfied: 

\begin{itemize}
\item[$(1)$]
$\ell$ is inert in $F/\Q$, 
\item[$(2)$]
$\ell$ is inert in $\Q_1/\Q$, 
\item[$(3)$]
$\ell$ splits completely in $\Q(\mu_p,\sqrt[p]{p})/\Q$. 
\end{itemize}

We fix once such a prime number $\ell$. 
For a finite prime $\f{q}$ of a number field, 
we identify finite primes and prime numbers when a number field is $\Q$, 
let $\F_{\f{q}}$ be the residue class field at $\f{q}$. 

\begin{lem}\label{lem3}
A prime number $\ell$ splits completely in $\Q(\mu_p,\sqrt[p]{p})/\Q$ 
if and only if the following two conditions are satisfied. 
\begin{itemize}
\item
$\ell \equiv 1\bmod{p}$.
\item
$p\bmod{\ell}\in (\F_{\ell}^{\times})^p$.  
\end{itemize}
\end{lem}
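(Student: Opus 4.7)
The plan is as follows. The field $L = \Q(\mu_p, \sqrt[p]{p})$ is the splitting field over $\Q$ of $f(x) = x^p - p$, so $L/\Q$ is Galois, and its discriminant is a power of $p$; in particular every rational prime $\ell \neq p$ is unramified in $L/\Q$. For such $\ell$, Dedekind's factorization theorem says that $\ell$ splits completely in $L/\Q$ if and only if $f$ splits into $p$ distinct linear factors in $\F_\ell[x]$. I will use this reformulation for both implications.

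For the ``if'' direction, assume $\ell \equiv 1 \pmod{p}$ and $p \bmod \ell \in (\F_\ell^\times)^p$. The first condition gives $\mu_p \subset \F_\ell^\times$, while the second provides $a \in \F_\ell^\times$ with $a^p = p$. Then in $\F_\ell[x]$ we have
$$
f(x) = \prod_{\zeta \in \mu_p(\F_\ell)} (x - \zeta a),
$$
a product of $p$ distinct linear factors, so $\ell$ splits completely in $L/\Q$ by the criterion above. Note that the case $\ell = p$ is automatically excluded by the congruence $\ell \equiv 1 \pmod p$.

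For the ``only if'' direction, suppose $\ell$ splits completely in $L/\Q$. Since $\Q(\mu_p) \subseteq L$, the prime $\ell$ splits completely in $\Q(\mu_p)/\Q$, which forces $\ell \equiv 1 \pmod p$. Since $\Q(\sqrt[p]{p}) \subseteq L$, the prime $\ell$ is also of residue degree one in $\Q(\sqrt[p]{p})$, so $f$ acquires a root in $\F_\ell$, and this is exactly the statement that $p$ is a $p$-th power in $\F_\ell^\times$.

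There is no serious obstacle here; the only point deserving attention is that ``$f$ has a root mod $\ell$'' and ``$f$ splits completely mod $\ell$'' coincide precisely when $\mu_p \subset \F_\ell^\times$, i.e.\ when $\ell \equiv 1 \pmod p$, which is exactly why both conditions appear in the statement of the lemma.
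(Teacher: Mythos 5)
Your proof is correct, and it reaches the same two elementary conditions as the paper, but by a different mechanism. The paper argues through the tower $\Q\subset\Q(\mu_p)\subset\Q(\mu_p,\sqrt[p]{p})$: it first records that $\ell$ splits completely in $\Q(\mu_p)$ iff $\ell\equiv 1\bmod p$, and then detects splitting of a degree-one prime $\f{L}$ of $\Q(\mu_p)$ in the Kummer layer by asking whether $X^p-p$ has a root in the residue field $\F_{\f{L}}\simeq\F_{\ell}$, using that $\Q(\mu_p,\sqrt[p]{p})/\Q(\mu_p)$ is unramified at $\f{L}$. You instead work in one step over $\Q$ via Dedekind's criterion applied to $f=X^p-p$, characterizing complete splitting in the splitting field $L$ by the complete factorization of $f$ into distinct linear factors mod $\ell$; your explicit factorization $f=\prod_{\zeta}(X-\zeta a)$ in the ``if'' direction is a nice touch that makes the equivalence between ``has a root'' and ``splits completely'' transparent under $\ell\equiv1\bmod p$. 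The only points you pass over quickly are (i) that Dedekind's theorem is stated for $\Q(\sqrt[p]{p})=\Q(\alpha)$ rather than for the splitting field, so you are implicitly using that a prime splits completely in a number field iff it splits completely in its Galois closure, and (ii) that the applicability of Dedekind's theorem requires $\ell\nmid[\mathcal{O}_{\Q(\alpha)}:\Z[\alpha]]$, which follows since this index divides a power of $p$ because $\operatorname{disc}(X^p-p)=\pm p^{2p-1}$; both are standard and easily supplied, so there is no gap. The paper's tower argument avoids any index considerations at the cost of working with a relative extension; your version is more self-contained at the level of polynomials over $\F_\ell$.
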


\begin{proof}
Let $\f{L}$ be a prime of $\Q(\mu_p)$ lying above $\ell$. 
The prime number $\ell$ splits completely in $\Q(\mu_p)$ 
if and only if $\ell\equiv 1\bmod{p}$. 
The prime $\f{L}$ splits completely in $\Q(\mu_p,\sqrt[p]{p})$ 
if and only if the equation $X^p-p \equiv 0 \bmod{\f{L}}$ has a root in $\F_{\f{L}}$ 
since $\Q(\mu_p,\sqrt[p]{p})/\Q(\mu_p)$ is unramified at $\f{L}$. 
This assertion is equivalent to that $p\bmod{\f{L}}\in (\F_{\f{L}}^{\times})^p$. 
Since $\f{L}$ has degree $1$, 
we have $\F_{\ell}\simeq \F_{\f{L}}$. 
Hence $p\bmod{\f{L}}\in (\F_{\f{L}}^{\times})^p$ if and only if 
$p\bmod{\ell}\in (\F_{\ell}^{\times})^p$. 
\end{proof}

By the condition $(3)$ and lemma \ref{lem3}, 
there is the unique subfield $k^+$ of $\Q(\mu_{\ell})$ such that $[k^+:\Q]=p$ 
in which $p$ splits completely. 
Since $k^+/\Q$ is unramified outside $\ell$, 
it holds that $p\nmid h_{k^+}$ by lemma \ref{Iwasawa56}. 

It is well known that $p\nmid h_{\Q_1}$ by lemma \ref{Iwasawa56}. 
By the condition $(2)$, 
the prime $\ell$ is inert in $\Q_1/\Q$. 
Since $k_1^+/\Q_1$ is ramified at only the unique prime of $\Q_1$ lying above $\ell$, 
it holds that $p\nmid h_{k_1^+}$ by lemma \ref{Iwasawa56}. 
Thus we have checked that $p$ does not divide both of $h_{k^+}=h_{k_0^+}$ and 
$h_{k_1^+}$. 
Then, 
by Fukuda's result \cite{Fukuda}, 
we have $p\nmid h_{k_n^+}$ for all non negative integers $n$. 
This implies that all of Iwasawa invariants $\lambda$, 
$\mu$ and $\nu$ of $k_{\infty}^+/k^+$ are $0$. 

Put $k=Fk^+$. 
From the choices of $F$ and $k^+$, 
$p$ splits completely in $k/\Q$. 
From the condition $(1)$, 
the prime $\ell$ is inert in $F/\Q$. 
Since $k/F$ is ramified at only the unique prime of $F$ lying above $\ell$ 
and since $p\nmid h_{F}$, 
it holds that $p\nmid h_k$ by lemma \ref{Iwasawa56}. 
By combining all of the above arguments, 
we have seen that the imaginary cyclic field $k$ of degree $2p$ satisfies 
all of the assumptions of proposition \ref{prop1}. 

Finally, 
we show that $X_{\tilde{k}}\neq 0$. 
Since $p$ splits completely in $k/\Q$, 
Leopoldt's conjecture holds for $k$ and $p$, 
and since $[k:\Q]=2p$, 
by lemma \ref{some}, 
it holds that $\tilde{k}/k_{\infty}$ is an unramified $\Z_p^p$-extension. 
Suppose that $p=3$. 
By using Mizusawa's software \cite{Mizusawa}, 
we find that $\lambda_3(F)=\lambda_3(\Q(\sqrt{-47}))=2$. 
By Kida's formula \cite{Kida}, 
one sees that $\lambda_3(k)=6$. 
As stated in the above, 
it holds that $\tilde{k}/k_{\infty}$ is an unramified $\Z_3^3$-extension, 
and therefore, 
there is a surjective morphism $X_{\tilde{k}}\to \Z_3^3$. 
In particular, 
we have $X_{\tilde{k}}\neq 0$. 

Suppose that $p>3$. 
For an algebraic extension $K/\Q$ and a prime number $p$, 
let $X_K$ be the Galois group of the maximal unramified abelian pro-$p$ extension of $K$. 
When $K/\Q$ is finite, 
let $\gamma$ be a topological generator of $\ga(K_{\infty}/K)$, 
and put $R=\Z_p[\![\ga(K_{\infty}/K)]\!]$. 
We need the following two lemmas. 

\begin{lem}\label{rank} 
Let $p$ be a prime number and let $K/\Q$ be a finite extension. 
Let $r_1$ and $r_2$ be the number of real primes and the number of complex primes 
of $K$ respectively. 
For each non negative integer $n$, 
let $E_n$ be the unit group of $K_n$. 
Let $E=\varprojlim_{n}E_n\otimes\Z_p$ be the projective limit of modules 
$E_n\otimes\Z_p$ with respect to the norm maps. 
Suppose that $K_{\infty}/K$ is totally ramified at all primes of $K$ lying above $p$. 
Then the $\Z_p$-rank of $E/(\gamma-1)E$ is $r_1+r_2$. 
\end{lem}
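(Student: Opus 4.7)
My plan is to analyze $E/(\gamma-1)E$ by combining the tautological exact sequence of $\Gm$-(co)invariants with a direct computation of the $\Lm$-rank of $E$. The starting point is the four-term exact sequence
\begin{equation*}
0 \lo E^{\Gm} \lo E \stackrel{\gamma-1}{\lo} E \lo E/(\gamma-1)E \lo 0,
\end{equation*}
where $\Gm = \ga(K_{\infty}/K)$ is topologically generated by $\gamma$. Applying the standard identity for finitely generated $\Lm$-modules, namely $\mathrm{rank}_{\Z_p}(M/(\gamma-1)M) - \mathrm{rank}_{\Z_p}(M^{\Gm}) = \mathrm{rank}_{\Lm}(M)$ (provable via the structure theorem, since $\Lm$-torsion contributes equally to both sides), reduces the problem to showing $\mathrm{rank}_{\Z_p}(E^{\Gm}) = 0$ and $\mathrm{rank}_{\Lm}(E) = r_1 + r_2$.

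For the first, I would show $E^{\Gm} = 0$. A $\Gm$-invariant element $(u_n) \in E$ has each $u_n$ fixed by $\ga(K_n/K)$; modulo the finite torsion coming from $\mu_{p^{\infty}}(K)$, this forces $u_n \in E_0 \otimes \Z_p$. Since $N_{K_{n+1}/K_n}$ acts on $\ga(K_n/K)$-fixed elements by raising to the $p$-th power, norm compatibility yields $u_0 = u_n^{p^n}$ for every $n$, so $u_0 \in \bigcap_n p^n(E_0 \otimes \Z_p) = 0$ because $E_0 \otimes \Z_p$ is a finitely generated $\Z_p$-module. The torsion case is then eliminated using the finiteness of $\mu_{p^{\infty}}(K)$: no compatible system of deepening $p$-power roots can survive inside a finite cyclic group.

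For the second, I would invoke Iwasawa's control theorem for units: under the total ramification of $K_{\infty}/K$ at all $p$-adic primes of $K$, the natural map $E/\omega_n E \to E_n \otimes \Z_p$ (with $\omega_n = \gamma^{p^n}-1$) has finite kernel and cokernel uniformly in $n$. Dirichlet's unit theorem, together with the observation that infinite primes neither ramify nor split in the cyclotomic $\Z_p$-extension (so real primes stay real and complex primes stay complex), gives $\mathrm{rank}_{\Z_p}(E_n \otimes \Z_p) = p^n(r_1+r_2) - 1$. Reading the leading coefficient of $p^n$ in $\mathrm{rank}_{\Z_p}(E/\omega_n E) = p^n(r_1+r_2) + O(1)$ identifies $\mathrm{rank}_{\Lm}(E) = r_1 + r_2$, which combined with $\mathrm{rank}_{\Z_p}(E^{\Gm}) = 0$ proves the lemma.

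The main obstacle is the control theorem for units, i.e., the finiteness (uniformly in $n$) of the kernel and cokernel of $E/\omega_n E \to E_n \otimes \Z_p$. This is a classical Iwasawa-theoretic result, but it relies crucially on the total ramification hypothesis at $p$-adic primes to control the $\Gm$-cohomology of the unit group at each finite layer; the rest of the proof is essentially bookkeeping with the exact sequence and Dirichlet's theorem.
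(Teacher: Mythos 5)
Your reduction to the two claims $\mathrm{rank}_{\Z_p}(E^{\Gamma})=0$ and $\mathrm{rank}_{\Lambda}(E)=r_1+r_2$ is sound, and your direct argument for the first claim is essentially workable (modulo the small correction that $(E_n\otimes\Z_p)^{\ga(K_n/K)}$ only contains $E_0\otimes\Z_p$ with finite index, which does not affect the rank count). The genuine gap is the second claim: the ``control theorem for units'' you invoke is not a standard citable result, and as you state it, it is false. Indeed, $\mathrm{rank}_{\Z_p}(E_n\otimes\Z_p)=p^n(r_1+r_2)-1$, while the lemma itself forces $\mathrm{rank}_{\Z_p}(E/\omega_nE)\geq p^n(r_1+r_2)$, so the kernel of $E/\omega_nE\to E_n\otimes\Z_p$ has positive $\Z_p$-rank at \emph{every} level $n$ (already at $n=0$ for $K=\Q$, $p$ odd, the target is $0$ while the source has rank $1$). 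The rank-one kernel would be harmless asymptotically, but the cokernel is the real problem: the image of $E$ in $E_n\otimes\Z_p$ is the group of universal norm units, and bounding its corank is precisely the deep content of the lemma, not an off-the-shelf fact. Descent for projective limits of \emph{global} unit groups is not governed by a Mazur-type control theorem; the relevant Tate cohomology groups $\widehat{H}^0(\ga(K_m/K_n),E_m\otimes\Z_p)$ grow with $m$, and controlling their intersection requires genuine input. (You also use the structure theorem for $E$, i.e.\ its finite generation over $\Lambda$, without justification; this too is not automatic for a projective limit of unit groups.)

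The paper avoids this circularity by working inside the semi-local principal units: it cites Kuz'min's exact sequence $0\to E\to U\to\f{X}\to X_{K_\infty}\to 0$, together with Iwasawa's theorems that $U$ is finitely generated of $\Lambda$-rank $[K:\Q]=r_1+2r_2$ with $U^{\Gamma}=0$ and that $\f{X}$ has $\Lambda$-rank $r_2$ (weak Leopoldt for the cyclotomic extension), and the fact that $X_{K_\infty}$ is torsion. This yields both the finite generation of $E$, the vanishing $E^{\Gamma}=0$ (by restriction from $U$), and $\mathrm{rank}_{\Lambda}E=r_1+2r_2-r_2=r_1+r_2$ in one stroke; the passage to $E/(\gamma-1)E$ is then the same bookkeeping you propose. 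To repair your argument you would have to prove the asymptotic version of your control statement, and the natural way to do so is exactly via these local-unit and weak-Leopoldt inputs — at which point you have reproduced the paper's proof.
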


{\bf Remark.} 
Lemma \ref{rank} can be deduced from theorem 10.3.25 and theorem 11.3.11 
of \cite{NSW}. 
Because we need only to know the $\Z_p$-rank of $E/(\gamma-1)E$ here, 
we prefer to prove lemma \ref{rank} briefly. 

\begin{proof}
For a $R$-module $M$, 
let $M^{\ga(K_{\infty}/K)}$ be the maximal submodule of $M$ on which 
$\ga(K_{\infty}/K)$ acts trivially. 
For each non negative integer $n$, 
let $U_{\f{p}_n}$ be the principal local unit group at a prime $\f{p}_n$ of $K_n$ lying 
above $p$, 
and let $U=\varprojlim_{n}\oplus_{\f{p}_n\mid p}U_{\f{p}_n}$ be the projective limit of modules 
$\oplus_{\f{p}_n\mid p}U_{\f{p}_n}$ with respect to the norm maps. 
By theorem 25 of \cite{Iwasawa73}, 
we can see that $U$ is a finitely generated $R$-module of rank $[K:\Q]=r_1+2r_2$, 
and $U^{\ga(K_{\infty}/K)}=0$. 
Let $\f{X}$ be the Galois group of the maximal abelian pro-$p$ 
extension over $K_{\infty}$ unramified outside all primes lying above $p$. 
By class field theory and by remark 2 of theorem 4.2 of \cite{Kuz'min}, 
there is the following exact sequence
$$
0 \to E\to U\to \f{X}\to X_{K_{\infty}}\to 0
$$
of $R$-modules. 
It is well known that the $R$-rank of $\f{X}$ is $r_2$, 
see theorem 17 of \cite{Iwasawa73} 
(see also theorem 13.31 of \cite{Washington}). 
Combining the above, 
since $X_{K_{\infty}}$ is a torsion $R$-module, 
we find that $E$ is a finitely generated $R$-module of rank $r_1+r_2$, 
and that $E^{\ga(K_{\infty}/K)}=0$. 
In particular, 
$E$ has no non trivial finite $R$-submodules. 

Let $T_RE$ be the $R$-submodule of $E$ which consists of all $R$-torsion elements of $E$. 
By the structure theorem of $R$-modules, 
we have an injective morphism $E \to R^{r_1+r_2}\oplus T_RE$ with a finite cokernel. 
Then we have an exact sequence
$$
\mbox{(finite)} \to E/(\gamma-1)E
\to 
\Z_p^{r_1+r_2}\oplus T_RE/(\gamma-1)T_RE
\to
\mbox{(finite)}
$$
of $\Z_p$-modules. 
Since $E^{\ga(K_{\infty}/K)}=0$, 
we find that $T_RE/(\gamma-1)T_RE$ is finite. 
Therefore the $\Z_p$-rank of $E/(\gamma-1)E$ is $r_1+r_2$. 
\end{proof}

\begin{lem}\label{p=3}
Let $p$ be a prime number and $K/\Q$ a totally imaginary 
finite extension in which the prime number 
$p$ splits completely. 
Assume that Leopoldt's conjecture for $p$ and $K$ holds. 
Then we have $[K:\Q]\leq 6$ if $X_{\tilde{K}}=0$.  
\end{lem}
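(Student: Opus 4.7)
The plan is to use the hypothesis $X_{\til{K}}=0$ together with the structure of $\til{K}/K$ as a $\Z_p$-power extension to pin down $X_{K_\infty}$ precisely, and then to combine this with the Kuz'min-type exact sequence from the proof of Lemma \ref{rank} to extract the numerical bound. Throughout, let $r_2$ denote the number of complex places of $K$, so $[K:\Q]=2r_2$.

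First, I would establish a rigidity statement on $X_{K_\infty}$. Since $\til{K}/K_\infty$ is an unramified abelian pro-$p$ extension by Lemma \ref{some}, we have $\til{K}\suq L_{K_\infty}$; conversely $L_{K_\infty}/\til{K}$ is itself unramified abelian pro-$p$, so the hypothesis $X_{\til{K}}=0$ forces $L_{K_\infty}=\til{K}$. Under Leopoldt's conjecture $\ga(\til{K}/K)\cong\Z_p^{r_2+1}$, hence
$$
X_{K_\infty}=\ga(\til{K}/K_\infty)\cong\Z_p^{r_2}
$$
as a $\Z_p$-module; and since $\til{K}/K$ is abelian, the $\Gm$-action on $X_{K_\infty}$ (by conjugation in $\ga(\til{K}/K)$) is trivial. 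So $X_{K_\infty}$ is a free $\Z_p$-module of rank $r_2$ on which $T=\gm-1$ acts as zero.

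Second, I would exploit this rigidity using the Kuz'min exact sequence
$$
0\lo E\lo U\lo \f{X}\lo X_{K_\infty}\lo 0
$$
from the proof of Lemma \ref{rank}, together with the long exact $\Gm$-cohomology sequences of $0\to E\to U\to V\to 0$ and $0\to V\to\f{X}\to X_{K_\infty}\to 0$, where $V=U/E$. Because $p$ splits completely in $K/\Q$, local Iwasawa theory gives $U_{\Gm}\cong\Z_p^d$ with $d=2r_2$; Lemma \ref{rank} gives $E_{\Gm}$ of $\Z_p$-rank $r_2$; and, under Leopoldt, class field theory identifies $(\f{X})_{\Gm}$ (up to finite index) with $\ga(M_K/K_\infty)$ of $\Z_p$-rank $r_2$, where $M_K$ is the maximal abelian pro-$p$ extension of $K$ unramified outside $p$. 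Triviality of the $\Gm$-action on $X_{K_\infty}$ then identifies, up to finite index, the cokernel of the connecting map $X_{K_\infty}^{\Gm}\to V_{\Gm}$ with the $p$-part $A_K$ of the ideal class group of $K$.

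Third, the bound $d\leq 6$ should come from a finer analysis than pure rank counting, since the $\Z_p$-ranks balance for every $r_2\geq 1$. The main obstacle will be identifying the invariant that cuts off at $r_2=3$: I expect it to arise from the $\Z_p$-torsion of $V_{\Gm}=U_{\Gm}/\text{image}(E_{\Gm})$ (which is controlled by Leopoldt and the $p$-adic regulator of $K$), combined with the strong constraint that $X_{K_\infty}\cong\Z_p^{r_2}$ carry trivial $\Gm$-action. Alternatively, the bound may come from a Galois-cohomological Euler characteristic computation for the pro-$p$ Galois group of $K$ unramified outside $p$, driven by the interplay between the $r_2+1$ independent $\Z_p$-extensions comprising $\til{K}/K$ and the $d=2r_2$ inertia subgroups at the primes above $p$, whose span in $\ga(\til{K}/K)\cong\Z_p^{r_2+1}$ must cover all but a finite quotient (namely a subquotient of $A_K$).
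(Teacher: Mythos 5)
Your first step is correct and coincides with the first part of the paper's proof: $X_{\til{K}}=0$ forces $L_{K_\infty}=\til{K}$, hence $X_{K_\infty}\simeq\Z_p^{r_2}$ with trivial $\ga(K_\infty/K)$-action. But the rest of the proposal has a genuine gap: you never produce the inequality that cuts off at $r_2=3$, and you say so yourself ("the bound should come from a finer analysis than pure rank counting"). Your observation that the $\Z_p$-ranks in the Kuz'min sequence $0\to E\to U\to \f{X}\to X_{K_\infty}\to 0$ balance for every $r_2\geq 1$ is accurate, and it is exactly why no amount of $\Gamma$-cohomology of that sequence alone can yield the bound. The two directions you sketch at the end (torsion of $V_\Gamma$ controlled by the $p$-adic regulator; an Euler characteristic count) are left entirely undeveloped, and neither is what the paper does.

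The missing idea is non-abelian: from $X_{\til{K}}=0$ and the pro-$p$ Burnside basis theorem one gets that the \emph{full} maximal unramified pro-$p$ extension of $\til{K}$ is trivial, so the whole group $\ca{G}_{K_\infty}$ (not merely its abelianization) equals $\ga(\til{K}/K_\infty)\simeq\Z_p^{r_2}$. The paper then invokes Theorem C of Ozaki \cite{Ozaki} (for $i=2$), which gives a surjection of $R$-modules $E\to H_2(\ca{G}_{K_\infty},\Z_p)\simeq X_{K_\infty}\wedge X_{K_\infty}\simeq\Z_p^{r_2(r_2-1)/2}$; triviality of the $\Gamma$-action makes this factor through $E/(\gm-1)E$, whose $\Z_p$-rank is $r_1+r_2=r_2$ by Lemma \ref{rank}. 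The resulting comparison of a quadratic quantity with a linear one, $\tfrac{r_2(r_2-1)}{2}\leq r_2$, is what forces $r_2\leq 3$. Without this input relating the global units to the second homology (i.e.\ the relation module) of the unramified Galois group, the abelian class-field-theoretic bookkeeping you set up cannot distinguish $r_2=3$ from $r_2=4$, so the proposal as written does not prove the lemma.
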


\begin{proof}
For an algebraic extension $L/\Q$, 
not necessary finite, 
let $\ca{G}_{L}$ be the Galois group of the maximal unramified pro-$p$ 
extension of $L$. 
It holds that $[K:\Q]=2r_2$. 
By lemma \ref{some}, 
$\ca{G}_{\tilde{K}}$ is a closed normal subgroup of $\ca{G}_{K_{\infty}}$, 
and it holds that 
$$
\ca{G}_{K_{\infty}}/\ca{G}_{\tilde{K}}\simeq \ga(\tilde{K}/K_{\infty}).
$$ 
Assume that $X_{\tilde{K}}=0$. 
By pro-$p$ version of Burnside's basis theorem, 
if $X_{\tilde{K}}=0$ then $\ca{G}_{\tilde{K}}=1$. 
By lemma \ref{some}, 
it holds that 
$$
\ca{G}_{K_{\infty}}=X_{K_{\infty}}=\ga(\tilde{K}/K_{\infty})\simeq \Z_p^{r_2},
$$
since Leopoldt's conjecture for $p$ and $K$ holds. 
Note that $\ga(K_{\infty}/K)$ acts on $X_{K_{\infty}}$ trivially since $\tilde{K}/K$ is abelian. 

By theorem C of \cite{Ozaki} for $i=2$, 
since $\ca{G}_{K_{\infty}}=X_{K_{\infty}}$, 
we have a surjective morphism
$$
E \to H_2(X_{K_{\infty}},\Z_p)\simeq X_{K_{\infty}}\wedge X_{K_{\infty}},
$$
of $R$-modules, 
where the exterior product $\wedge$ is taken over $\Z_p$. 
The action of $\gamma$ on $x\wedge y\in X_{K_{\infty}}\wedge X_{K_{\infty}}$ 
is given by 
$$
\gamma(x\wedge y)=(\gamma x)\wedge (\gamma y).
$$ 
Since $\ga(K_{\infty}/K)$ acts on $X_{k_{\infty}}$ trivially, 
the above morphism factors through $E/(\gamma-1)E$.  
Thus, 
since 
$$
X_{K_{\infty}}\wedge X_{K_{\infty}}\simeq \Z_p^{\frac{r_2(r_2-1)}{2}},
$$ 
we also have a surjective morphism
$$
E/(\gamma-1)E\to \Z_p^{\frac{r_2(r_2-1)}{2}}.
$$
By the assumption that $K$ is totally imaginary and by lemma \ref{rank}, 
the $\Z_p$-rank of $E/(\gamma-1)E$ is equal to $r_2$. 
This implies that 
$$
\frac{r_2(r_2-1)}{2}\leq r_2.
$$
Therefore we have $[K:\Q]=2r_2\leq 6$. 
\end{proof}

We finish the proof of theorem \ref{mainthm2}. 
By our assumption that $p>3$, 
it holds that $[k:\Q]=2p>6$. 
By lemma \ref{p=3}, 
we have $X_{\tilde{k}}\neq 0$. 
Since there are infinitely many prime numbers such as $\ell$, 
this completes the proof of theorem \ref{mainthm2}. 
\qed
\\

{\bf Remark.}
Okano \cite{Okano} had obtained a result stricter than lemma \ref{p=3} 
for certain imaginary abelian fields. 
In fact, 
the non-triviality of $X_{\tilde{k}}$ is also deduced from theorem 1.2 of \cite{Okano} 
when $p>3$. 

\section{Non-freeness conjecture}
We give an application to non-abelian Iwasawa theory in the sense of Ozaki \cite{Ozaki}. 
Let $p$ be a prime number and $k_{\infty}/k$ the cyclotomic $\Z_p$-extension of 
a number field $k$. 
Recall we had denoted by $\ca{G}_{k_{\infty}}$ the Galois group of 
the maximal unramified pro-$p$ extension 
over $k_{\infty}$ in the proof of lemma \ref{p=3}. 
In his lecture \cite{OzakiConj}, 
Ozaki proposed the following conjecture.

\begin{conj1}[Non-freeness Conjecture \cite{OzakiConj}]
For each prime number $p$ and each finite extension $k/\Q$, 
the group $\ca{G}_{k_{\infty}}$ never be a non-abelian free pro-$p$ group. 
\end{conj1} 

Each abelian free pro-$p$ group is isomorphic to $\Z_p$. 
We have checked that there exist infinitely many imaginary quadratic fields 
$k$ in which $p$ splits 
such that $X_{k_{\infty}}\simeq \Z_p$ for each prime number $p$, 
and hence 
$
\ca{G}_{k_{\infty}}
\simeq \Z_p
$, 
see proposition \ref{thm2}. 

In \cite{fujii}, 
the author showed following. 

\begin{lem}[\cite{fujii}]\label{F2011}
Let $p$ be a prime number and $k/\Q$ a finite extension. 
If a prime number $p$ splits completely in $k$ and $X_{\tilde{k}}\sim 0$, 
then $\ca{G}_{k_{\infty}}$ is not a non-abelian free pro-$p$ group. 
\end{lem}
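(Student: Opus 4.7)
The plan is to argue by contradiction. Suppose $\ca{G}_{k_\infty}$ is a non-abelian free pro-$p$ group. Since $p$ splits completely in $k/\Q$, Lemma \ref{some} gives that $\tilde k/k_\infty$ is an unramified abelian pro-$p$ extension, so $\tilde k$ lies in the maximal unramified pro-$p$ extension of $k_\infty$; hence $\ca{G}_{\tilde k}$ appears as a closed normal subgroup of $\ca{G}_{k_\infty}$ with quotient $\ga(\tilde k/k_\infty)\simeq \Z_p^{r_2+\delta}$. By the classical theorem that every closed subgroup of a free pro-$p$ group is again free pro-$p$, the subgroup $\ca{G}_{\tilde k}$ is itself free pro-$p$, and therefore $X_{\tilde k}=\ca{G}_{\tilde k}^{ab}$ is $\Z_p$-torsion-free.

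Next, the freeness of $\ca{G}_{k_\infty}$ yields $H_2(\ca{G}_{k_\infty},\Z_p)=0$, so the Hochschild--Serre $5$-term exact sequence in pro-$p$ homology for the extension
$$
1\lo \ca{G}_{\tilde k}\lo \ca{G}_{k_\infty}\lo \ga(\tilde k/k_\infty)\lo 1
$$
collapses to
$$
0\lo \wedge^2_{\Z_p}\ga(\tilde k/k_\infty)\lo (X_{\tilde k})_{\ga(\tilde k/k_\infty)}\lo X_{k_\infty}\lo \ga(\tilde k/k_\infty)\lo 0,
$$
embedding the free $\Z_p$-module $\wedge^2_{\Z_p}\ga(\tilde k/k_\infty)$ into the coinvariants of $X_{\tilde k}$. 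I would then combine this embedding with the pseudo-null hypothesis on $X_{\tilde k}$ and an Iwasawa-theoretic rank bound on these coinvariants---modeled on Lemma \ref{rank} and on the Ozaki-type surjection from $E/(\gamma-1)E$ used in the proof of Lemma \ref{p=3}---to force $X_{\tilde k}=0$. Burnside's basis theorem then yields $\ca{G}_{\tilde k}=1$, so that $\ca{G}_{k_\infty}\simeq \ga(\tilde k/k_\infty)$ is abelian, contradicting non-abelian freeness.

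The hard step will be controlling the coinvariants $(X_{\tilde k})_{\ga(\tilde k/k_\infty)}$ quantitatively: pseudo-nullity alone does not force $\Z_p$-rank zero, since for example $\Lambda/(T_1,\ldots,T_d)\simeq \Z_p$ is pseudo-null of positive $\Z_p$-rank. The contradiction must therefore combine the $\Z_p$-torsion-freeness inherited from the hypothetical freeness of $\ca{G}_{k_\infty}$ with an Iwasawa-theoretic rank identity of the same flavour as the one invoked in the proof of Lemma \ref{p=3}, in order to rule out a non-trivial pseudo-null $\Z_p$-torsion-free $X_{\tilde k}$ under the complete-splitting hypothesis.
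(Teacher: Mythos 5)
Your proposal has the right opening moves, but it is not a proof: the decisive step is announced rather than carried out, and the paper itself offers nothing to fill it in, since Lemma \ref{F2011} is quoted from \cite{fujii} without proof. Concretely, the reduction you set up --- $\ca{G}_{\tilde k}$ is a closed normal subgroup of $\ca{G}_{k_\infty}$ with quotient $\ga(\tilde k/k_\infty)\simeq\Z_p^{r_2+\delta}$ by Lemma \ref{some}, closed subgroups of free pro-$p$ groups are free, $H_2$ of a free pro-$p$ group vanishes, and the five-term sequence then injects $\wedge^2_{\Z_p}\ga(\tilde k/k_\infty)$ into the coinvariants $(X_{\tilde k})_{\ga(\tilde k/k_\infty)}$ --- is correct and is the natural starting point. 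But everything after that is a declaration of intent. The hypothesis $X_{\tilde k}\sim 0$ is never actually used, and, as you yourself observe, pseudo-nullity does not bound the coinvariants: for $\Lambda=\Z_p[[T_0,\dots,T_r]]$ with $r\ge 2$ the module $\Lambda/(T_1,T_2)$ is pseudo-null, $\Z_p$-torsion-free, and has coinvariants isomorphic to $\Z_p[[T_0]]$, so neither the $\Z_p$-torsion-freeness inherited from the hypothetical freeness of $\ca{G}_{k_\infty}$, nor pseudo-nullity, nor their conjunction forces $X_{\tilde k}=0$, or even forces $(X_{\tilde k})_{\ga(\tilde k/k_\infty)}$ to have small $\Z_p$-rank. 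Some genuinely arithmetic input about $X_{\tilde k}$, beyond its being an abstract pseudo-null module sitting in that group extension, is required, and the proposal does not identify it.

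The auxiliary tools you point to do not obviously supply the missing input either. Lemma \ref{rank} and the surjection from $E$ onto $H_2$ coming from Ozaki's theorem C are applied in the proof of Lemma \ref{p=3} only under the hypothesis $\ca{G}_{K_\infty}=X_{K_\infty}$, i.e.\ precisely when the group is already abelian; in your contradiction argument $\ca{G}_{k_\infty}$ is assumed to be non-abelian free, so that form of the theorem is not available, and in any case $H_2(\ca{G}_{k_\infty},\Z_p)=0$ for a free group, so such a surjection would carry no information. The real content of \cite{fujii} is exactly the mechanism by which $X_{\tilde k}\sim 0$ constrains the extension $1\to\ca{G}_{\tilde k}\to\ca{G}_{k_\infty}\to\ga(\tilde k/k_\infty)\to 1$; until you exhibit that mechanism (a concrete rank identity, or a structural consequence of pseudo-nullity for $(X_{\tilde k})_{\ga(\tilde k/k_\infty)}$ that uses its Galois-theoretic origin), the argument has a genuine gap at its central point.
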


As a consequence of lemma \ref{F2011}, the proof of theorem \ref{main1} 
and results given by several authors, 
we have the following. 

\begin{cor}\label{main2}
Let $p$ be a prime number. 
Then there exist infinitely many imaginary abelian fields $k$ satisfying the 
following three conditions.
\\
$(1)$ 
The prime number $p$ splits completely in $k/\Q$. 
\\
$(2)$ 
$\ca{G}_{k_{\infty}}$ is a non-abelian pro-$p$ group.
\\
$(3)$ 
$\ca{G}_{k_{\infty}}$ is not a free pro-$p$ group. 
\end{cor}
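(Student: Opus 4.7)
The plan is to reuse the infinite family of imaginary abelian fields $k$ produced in Theorem \ref{main1}: for $p = 2$, the fields $\Q(\sqrt{-q})$ with $q \equiv 15 \pmod{16}$; for $p \geq 3$, the imaginary cyclic fields $k = F k^+$ of degree $2p$ obtained through the Chebotarev argument on $F \Q_1 \Q(\mu_p, \sqrt[p]{p})/\Q$. The construction guarantees $X_{\tilde{k}} \neq 0$, $X_{\tilde{k}} \sim 0$, and that $p$ splits completely in $k/\Q$; the last is precisely condition $(1)$.

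For condition $(3)$: since $p$ splits completely in $k$ and $X_{\tilde{k}} \sim 0$, Lemma \ref{F2011} shows that $\ca{G}_{k_{\infty}}$ is not a non-abelian free pro-$p$ group. The only abelian free pro-$p$ groups are $\{1\}$ and $\Z_p$, so it remains to rule out $\ca{G}_{k_{\infty}} \simeq \Z_p$. This follows because the $\Z_p$-rank of the abelianization $X_{k_{\infty}}$ is at least $2$ in all cases: for $p = 2$ by Ferrero--Kida's bound $\lambda_2(k) \geq 3$, and for $p \geq 3$ by Lemma \ref{some}, which makes $\tilde{k}/k_{\infty}$ an unramified $\Z_p^p$-extension with $p \geq 3$ and thus produces a $\Z_p^p$-quotient of $X_{k_{\infty}}$. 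Hence $\ca{G}_{k_{\infty}}$ is not free.

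For condition $(2)$ I argue by contradiction. If $\ca{G}_{k_{\infty}}$ were abelian, then every unramified pro-$p$ extension of $k_{\infty}$ would be abelian over $k_{\infty}$. Since $L_{\tilde{k}}/\tilde{k}$ is unramified by definition and $\tilde{k}/k_{\infty}$ is unramified by Lemma \ref{some}, the composite $L_{\tilde{k}}/k_{\infty}$ is unramified pro-$p$, so $\ga(L_{\tilde{k}}/k_{\infty})$ would be abelian and fit in a short exact sequence
\[
0 \to X_{\tilde{k}} \to \ga(L_{\tilde{k}}/k_{\infty}) \to \ga(\tilde{k}/k_{\infty}) \to 0
\]
on which the inner-automorphism action of $\ga(\tilde{k}/k_{\infty})$ on $X_{\tilde{k}}$ is trivial. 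The main obstacle of the whole argument is ruling out this triviality; my proposed route is to combine it with the Kida-type formula for $\lambda_p(k)$ (computed explicitly in the proof of Theorem \ref{mainthm2} as $\lambda_3(k) = 6$ when $p = 3$) and the $\Z_p$-rank equality it would force between $X_{k_{\infty}}$ and $X_{\tilde{k}} \oplus \ga(\tilde{k}/k_{\infty})$ to produce a quantitative inconsistency, or alternatively to invoke the structural theorem of Okano (cited in the Remark following Theorem \ref{mainthm2}) to refine the non-triviality of $X_{\tilde{k}}$ into an obstruction to the trivial $\ga(\tilde{k}/k_{\infty})$-action. Either route, applied to the infinite supply of fields from Theorem \ref{main1}, yields the desired infinitely many imaginary abelian fields satisfying $(1)$--$(3)$.
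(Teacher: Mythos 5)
Your handling of conditions $(1)$ and $(3)$ is fine. For $(3)$ you argue slightly differently from the paper: you rule out the abelian free possibilities $1$ and $\Z_p$ directly by showing the abelianization $X_{k_\infty}$ has $\Z_p$-rank at least $2$ (via $\lambda_2(k)\geq 3$ for $p=2$, and via the unramified $\Z_p^p$-quotient $\ga(\tilde{k}/k_\infty)$ for $p\geq 3$), whereas the paper deduces $(3)$ from $(2)$ together with Lemma \ref{F2011}. Your route for $(3)$ is valid and has the small advantage of not depending on $(2)$.

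The genuine gap is condition $(2)$. You never actually prove that $\ca{G}_{k_\infty}$ is non-abelian: you set up the contradiction hypothesis, correctly observe that abelianness would force $\ga(\tilde{k}/k_\infty)$ to act trivially on $X_{\tilde{k}}$, and then explicitly defer the key step to one of two unexecuted ``routes.'' The first route (a rank count against Kida's formula) does not obviously close: when $\ga(\tilde{k}/k_\infty)\simeq\Z_p^{r_2}$ acts trivially on $X_{\tilde{k}}$, the module $X_{\tilde{k}}$ is killed by the ideal generated by the $r_2$ elements $\gamma_i-1$, which for $r_2\geq 2$ already forces pseudo-nullity, so no numerical inconsistency with $\lambda_3(k)=6$, $r_2=3$ is apparent; moreover a pseudo-null $\Lambda$-module need not be finitely generated over $\Z_p$, so comparing $\Z_p$-ranks of $X_{k_\infty}$ and $X_{\tilde{k}}\oplus\ga(\tilde{k}/k_\infty)$ is not a well-posed obstruction. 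The paper instead imports two specific external theorems: for $p=2$ it invokes theorem 2 of Mizusawa--Ozaki, which requires \emph{strengthening} the congruence from $q\equiv 15\bmod{16}$ to $q\equiv 31\bmod{32}$ (so your family must be restricted, a point your reuse of Theorem \ref{main1} misses); for $p\geq 3$ it invokes theorem 1.2 of Okano, checking its hypothesis via $\lambda_3(F)=2$ when $p=3$. Your second ``route'' gestures at Okano but neither states the result nor verifies its hypotheses, so as written the proposal does not establish $(2)$.
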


\begin{proof}
Let $p=2$ and $q$ be a prime number with $q\equiv 31\bmod{32}$. 
Put $k=\Q(\sqrt{-q})$. 
Then, 
by proposition \ref{Minardi}, 
$X_{\tilde{k}}$ is a pseudo-null $\Lambda$-module. 
Hence, 
by lemma \ref{F2011}, 
$\ca{G}_{k_{\infty}}$ is not a non-abelian free pro-$2$ group. 
Further, 
by theorem 2 of \cite{Mizusawa-Ozaki}, 
$\ca{G}_{k_{\infty}}$ is not abelian. 

Let $p\geq 3$. 
We shall use notations the same to that are in the proof of theorem \ref{mainthm2}. 
We have checked that $X_{\tilde{k}}$ is a pseudo-null $\Lambda$-module. 
By lemma \ref{F2011}, 
$\ca{G}_{k_{\infty}}$ is not a non-abelian free pro-$p$ group. 
By theorem $1.2$ of \cite{Okano}, 
$\ca{G}_{k_{\infty}}$ is not abelian, 
since $\lambda_3(F)=2$ when $p=3$. 
\end{proof}

\section*{Acknowledgments}
The author would like to express his thanks to Takuya Yamauchi, 
the study of this article was motivated by questions from Yamauchi to the author. 
This research was partly supported by JSPS KAKENHI Grant Numbers 
JP18K03259, JP19H01783.

\end{document}